\theoremstyle{plain}
\newtheorem{theorem}{Theorem}[section]
\theoremstyle{definition}
\newtheorem{definition}[theorem]{Definition}
\newtheorem{proposition}[theorem]{Proposition}
\newtheorem{remark}[theorem]{Remark}
\newtheorem{lemma}[theorem]{Lemma}
\newtheorem{corollary}[theorem]{Corollary}
\newtheorem{example}[theorem]{Example}
\newcommand{\num}{\renewcommand{\labelenumi}{(\roman{enumi})}
  \renewcommand{\theenumi}{(\roman{enumi})}}
\def\dash{\discretionary{-}{}{-}\penalty1000\hskip0pt}
\newcommand{\R}{\mathbb{R}}
\DeclareMathOperator{\sk}{\mathit{sk}}
\DeclareMathOperator{\hf}{H\Phi}
\def\@captionfont{\normalfont\footnotesize}\makeatother
\title{Equilibrium stressability of multidimensional frameworks}
\author[KMPSSS]{Oleg Karpenkov, Christian M\"uller, Gaiane Panina,
Brigitte Servatius, Herman Servatius, Dirk Siersma}
\address{Oleg Karpenkov\\University of Liverpool
}
\email{karpenk@liv.ac.uk}
\address{Christian M\"uller\\
TU Wien
}
\email{cmueller@geometrie.tuwien.ac.at}
\address{Gaiane Panina\\
PDMI RAS, St. Petersburg State University
}
\email{gaiane-panina@rambler.ru}
\address{Dirk Siersma\\
University of Utrecht}
\email{d.siersma@uu.nl}
\address{Brigitte Servatius\\
Worcester Polytechnic Institute}
\email{bservat@wpi.edu}
\address{Herman Servatius\\
Worcester Polytechnic Institute}
\email{hservat@wpi.edu}
\thanks{%
The collaborative research on this article was supported by the
``Research-In-Groups'' program of ICMS Edinburgh, UK.
O.~Karpenkov is partially supported by EPSRC grant EP/N014499/1 (LCMH) and
C.~M{\"u}ller by the Austrian Science Fund (FWF) through project P~29981.
}
\keywords{framework, tensegrity, equilibrium stress, self-stress, discrete multiplicative $1$-form, Cayley algebra, Maxwell-Cremona correspondence, lifting, Cayley algebra}
\begin{document}

\begin{abstract}
  We prove an equilibrium stressability criterium for trivalent
  multidimensional tensegrities. The criterium appears in different
  languages: (1) in terms of stress monodromies, (2) in terms of
  surgeries, (3) in terms of exact discrete 1-forms, and (4) in Cayley
  algebra terms.
%






\end{abstract}

\maketitle
\tableofcontents

\section{Introduction}\label{introduction}
In the previous century, Fuller~\cite{marks1960dymaxion} coined the term {\em tensegrity}, a combination of `tension' and `integrity',
to describe networks of rods and cables, such as those created by artist Kenneth Snelson, in which the tension of the cables and the compression
in the rods combine to yield structural integrity to the whole.
More generally, the word tensegrity is used to describe a variety of
practical and abstract structures, e.g.\ bicycle tires and tents, whose rigidity follows from the balance of {\em tension} and {\em compression},
in the mathematical literature the {\em stress}, on the members.

   Practically, structures exhibiting tensegrity may be generated and analyzed using conventional techniques of structural
   engineering~\cite{pellegrino2003}.  Theoretically, tensegrity is often considered
   as part of the study of geometric constraint systems,~\cite{Connelly2013,sitharam2018handbook}.   The classical tensegrity model consists of a
    set of vertices $V$, and
   two graphs, $(V,S)$ and $(V,C)$, the graph of {\em struts} and {\em cables}, and a placement function $\mathbf{p}:V \rightarrow \mathbb{R}^d$.
   One looks for a motion of the placed vertices such that the distances between pairs of vertices connected by struts do not
   fall below their initial values, and such that the distances between pairs of vertices connected by cables do not
   expand beyond their initial value.  If no such motion exists, apart from the rigid motions of the space itself, the system is said to be {\em rigid}.
   A {\em stress} is a function $s:S \cup C \rightarrow \mathbb{R}$,
   with $s(t) \geq 0$ and $s(c) \leq 0$ for all cables, $c \in C$,  and
   struts, $t \in S$.  A stress is an {\em equilibrium stress} if for each vertex $v$
      $$ \sum_{(v,w) \in C \cup S} s((v,w)) (\mathbf{p}(v) - \mathbf{p}(w)) = \mathbf{0}. $$

   There are two avenues in which the existence of a proper, i.e.\ nowhere zero,  equilibrium stress may allow one to establish structural integrity.  A result of
   Roth and Whitely~\cite{rothwhiteley1981} states that if a tensegrity has a proper equilibrium stress, and if the placement for
   $(V,C \cup S)$ is statically rigid as a
   bar and joint framework, then that tensegrity must be first order rigid and hence
   rigid. More delicately, if the proper equilibrium stress passes the {\em second-order stress test} of Connelley and Whiteley~\cite{Connelly1990SecondOrderRA},
   then the tensegrity structure is second order rigid, hence rigid.

   Another important aspect of proper equilibrium stresses is the connection between the existence of an equilibrium stress
and the lifting of embedded graphs into higher dimensions, as provided by
the theory of Maxwell-Cremona.
It was understood by Lee, Ryshkov, Rybnikov, and some others that the connection between equilibrium stresses and lifts extends in certain cases to
arbitrary CW-complexes $M$ realized in any dimension $d$, also not necessarily embedded.
Although the subject can be traced earlier (see \cite{Lee96,whiteley1989}), the first systematic study of multidimensional stresses,
liftings and reciprocal diagrams was undertaken by Rybnikov in
\cite{Rybnikov991,Rybnikov99}.

In the present paper we introduce $d$-frameworks and their equilibrium
stresses, i.e.\ self-stresses, in a slightly broader way than it was done
by Rybnikov (Section~\ref{sectionmaindefs}).
We introduce face paths and stress transition along such face paths in
$d$-frameworks in Section~\ref{sectionstressability}.
We also give (Section~\ref{sectionselfstressabilitycriterium}) necessary
and sufficient conditions for the equilibrium stressability of trivalent
frameworks. This result appears as a generalization of equilibrium
stressability criteria for classical
tensegrities~\cite{karpenkov+2019-arXiv}.
The conditions are equivalently expressed in terms of exact discrete
multiplicative $1$-forms, Cayley algebra, or, in terms of some surgeries
introduced in Section~\ref{sectionselfstressabilitycriterium}.

In Section~\ref{sectionstressabilityII},
we explain how Rybnikov's frameworks (R-frameworks, for short) arise in the proposed context. We give an equilibrium stressability criterion
and derive some examples that demonstrate similarities and differences between planar and multidimensional tensegrities.

\section{Main definitions and constructions}\label{sectionmaindefs}
Our model for tensegrity in this paper will be based on the following structure.
Let $D>d\ge 1$ be two integers. In the sequel, the term \emph{plane} means
an affine subspace in $\R^D$.

\begin{definition}\label{dframeworkdefinition}
A {\em $d$-framework} $\mathcal{F}=(E,F,I,\mathbf{n})$ consists of
$E$, a collection of $(d{-}1)$-dimensional planes in $\R^D$;
$F$, a collection of $d$-dimensional planes in $\R^D$;
a subset $I \subset \{(p,q) \in (E \times F) \mid p\subset q\}$;
a function $\mathbf{n}$ assigning to each pair
$(e,f)$ with $e\in E$ and $(e,f)\in I$, a unit vector
$\mathbf{n}(e,f)$  which is contained in $f$ and which is normal to $e$.
We call planes from $F$ \emph{faces} and planes  from $E$
\emph{edges}.
The set $I$ is called the {\it set of incidences}.
\end{definition}

A $d$-framework is called \emph{generic}
if, for every $e\in E$, all the planes $f$ with $(e,f)\in I$ are distinct.

%




Let $\mathcal{F}=(E,F,I,\mathbf{n})$ be a $d$-framework.
A \emph{stress} $s$ on $\mathcal{F}$ is any function $s:F\to \R$.
A framework $\mathcal{F}$ together with a stress $s$ is said to be in {\em
equilibrium} if for every $e\in E$ we have
\begin{equation}\label{self-stress-d-framework}
\sum_{(e,f)\in I} s(f)\mathbf{n}(e,f)=0.
\end{equation}
Such a stress is called an {\em equilibrium stress},  a {\em  self-stress} or sometimes
a {\em prestress} for $\mathcal{F}$.
\begin{definition}\label{tensegritydefinition}
A $d$-framework is said to be \emph{self-stressable} or a
{\it tensegrity} if there exists a non-zero self-stress on it.
\end{definition}

\begin{example}
The simplest non-trivial example here is the classical case of graphs in the plane ($D=2,d=1$).
\end{example}

We say that a $d$-framework is \emph{trivalent} if each element of $E$ is
incident (i.e., contained in a pair in $I$) to precisely 3 elements of
$F$.

Surface based tensegrities of this type are models for minimal surfaces (or, more generally, harmonic surfaces)
which meet at edges, such as, soap bubbles or tents.  In this model, in
the $D = 3$ case, the surfaces are flat, and we can think of them as rigid
plates, each having an expansion or contraction coefficient, say caused by
heat or cooling, for which the equilibrium condition indicates that the
forces cancel on the edges, so that the framework does not deform.

\begin{example}\label{K5stresssEx}
  Let $d = 2$ and  $D = 3$. Consider a $2$-framework whose edges, $E$,
  and faces, $F$, correspond to the edges and triangles of the graph $K_5$, embedded
  in $\mathbb{R}^3$.   If four of the vertices, $\{1,2,3,4\}$, of $K_5$ are placed as vertices of a regular tetrahedron
and the fifth one as their centroid, then the resulting $2$-framework is generic in our sense.
For each edge-face pair choose the normal to be a unit vector pointing into the interior of the
face-triangle.
Note that the chosen normals sum to the zero vector around the lines corresponding to edges $\{ i,5\}$,
so choosing equal stress on these interior triangles leaves those edges equilibrated.  Then, it is easy to see
that choosing stresses on the exterior and interior triangles in the ratio
$-\sqrt{6}/4$ yields an equilibrium stress.

In this example one may imagine the interior expanding triangles exerting an outward force balanced by the contracting
``skin'' of the exterior  triangles.
\end{example}

\begin{example}\label{K5nonstressEx}
Again, let $d = 2$ and  $D = 3$.  We may create a different $2$-framework based on the graph
$K_5$ in $\mathbb{R}^3$
by keeping $E$ as before, and associating the faces $F$ to the $K_4$ subgraphs of $K_5$, with the usual incidence relation.
Since any two $K_4$'s intersect in $3$ edges,
their face planes must be identical, and the $5$ vertices of the embedded $K_5$ must be coplanar.

Since the normal vectors all lie in the plane of the $K_5$, it is no loss of generality to assume that the
vertices lie on a regular pentagon, and it is quickly checked that only the zero stress
satisfies Equation~\eqref{self-stress-d-framework}.
\end{example}

\begin{example}\label{CubeEx} Let $d = 2$ and  $D = 3$.
  Consider the vertices of a regular cube and set $E$ to be the set of all
  lines joining a pair of non-antipodal vertices (see
  Figure~\ref{tense2019cobeexample01}). The face planes $F$ consist of all six
  planes containing the faces of the cube, together with
  the six planes containing antipodal pairs of cube edges,
  as well as the eight planes of the dual tetrahedra.
  \begin{figure}[htb]
    \centering
    \includegraphics[scale=.9]{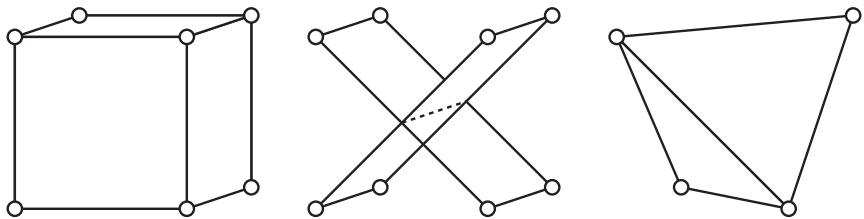}
    \caption{A $3$-framework based on the cube with three types of faces.}
    \label{tense2019cobeexample01}
  \end{figure}
  Let incidences be induced by containment. Since each plane contains a
  polygon of edges supported by incident lines of the structure, we may
  take the normals to be inwardly pointing
  unit vectors. It is easy to check that the self-stresses on the three
  types of faces are in the ratio
  $1:-\sqrt{2}:\sqrt{3}/4$.
\end{example}

\begin{example}\label{OctahedronEx}
This example has two versions, both with $d = 2$ and $D = 3$.
Consider the vertices of an octahedron, regularly embedded in $\mathbb{R}^3$.
\begin{figure}[htb]
  \centering
  \includegraphics[scale=1.1]{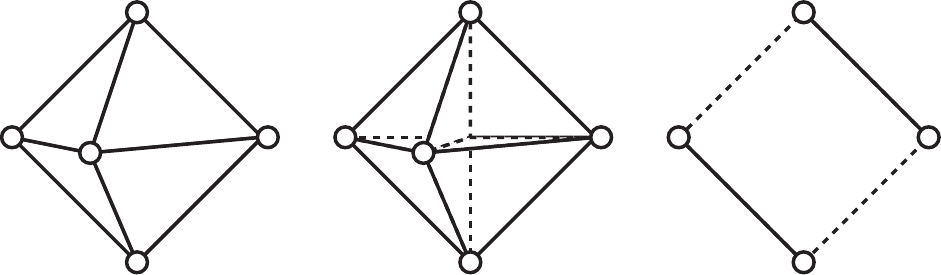}
  \caption{Three face types: triangles, squares, or faces with just two
    edges.}
\end{figure}
The set of 12 edge lines $E$  lie along the edges of the octahedron, and
the set $F$ of 11 face planes will consist of those eight supporting triangles of
the octahedron, together with the three planes which pass through four coplanar vertices.
Let the incidences be all those induced by containment and, as before, let all normals be
chosen inwardly pointing with respect to the triangle or square to which they belong.
Then this is easily computed to be stressable, and hence a tensegrity. In fact, the self-stress
is unique, since each each line is incident to three distinct planes.

As an alternative, we can take each of the three planes containing four vertices to have
multiplicity $2$, with each one incident to a different
pair of opposite lines, and with the same choice of normals.
This structure consisting of 12 lines and 14 planes is also a tensegrity.
\end{example}


\section{Self-stressability of frameworks.}\label{sectionstressability}
In this section we study self-stressability of trivalent $d$-frameworks in
$\R^{d+1}$, so starting from now on, we assume that $D=d+1$.

\subsection{Self-stressability of face-paths and face-cycles}\label{stress-s2}
Let us start with the following general definition.

\subsubsection{Face-path and face-cycle}
 Let $E = (e_1, \ldots, e_k)$ be
a sequence of distinct $(d{-}1)$-dimensional planes in $\R^D$;
$F = (f_{0,1}, f_{1,2}, \ldots, f_{k,k+1})$ be a sequence of $d$-dimensional
planes in $\R^D$;
$\hat F = (\hat f_{1}, \ldots, \hat f_k)$ be a sequence of $d$-dimensional
planes in $\R^D$.
Then the collection $(E,F,\hat F)$ is said to be a \emph{face-path} if for
$i=1,\ldots,k$ we have
$$
e_i\subset f_{i,i+1}, \quad
e_i\subset f_{i-1,i}, \quad \hbox{and} \quad
e_i\subset \hat f_{i}.
$$
For example, removal of two antipodal triangles of an octahedron leaves
a face-cycle with six faces.

Denote the set of pairs defined by  these inclusions by $I$.
For a particular choice of normals $\mathbf{n}$ we obtain a $d$-framework
$\mathcal{F} = (E,F\cup\hat F, I, \mathbf{n})$ which is called a
\emph{face-path $d$-framework}, see Figure~\ref{tense2019moebius01a}.
It
is called a \emph{face-cycle $d$-framework} if $f_{0,1}=f_{k,k+1}$. In
this case it is denoted by $C(E,F,\hat F,\mathbf{n})$,
see~Figure~\ref{tense2019moebius01c}.
\begin{figure}[t]
  \centering
  \includegraphics{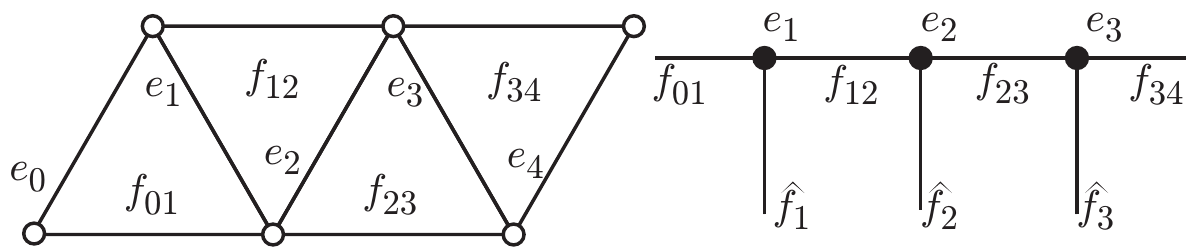}
  \caption{A face path contained in the framework described in
  Example~\ref{K5stresssEx} with $f_{0,1}= \{1,2,4\}$, $f_{1,2}=
  \{2,3,4\}$, $f_{2,3}= \{3, 4, 5\}$, $f_{3,4}= \{1,3,5\}$.}
  \label{tense2019moebius01a}
\end{figure}

\begin{figure}[htb]
  \centering
  \includegraphics{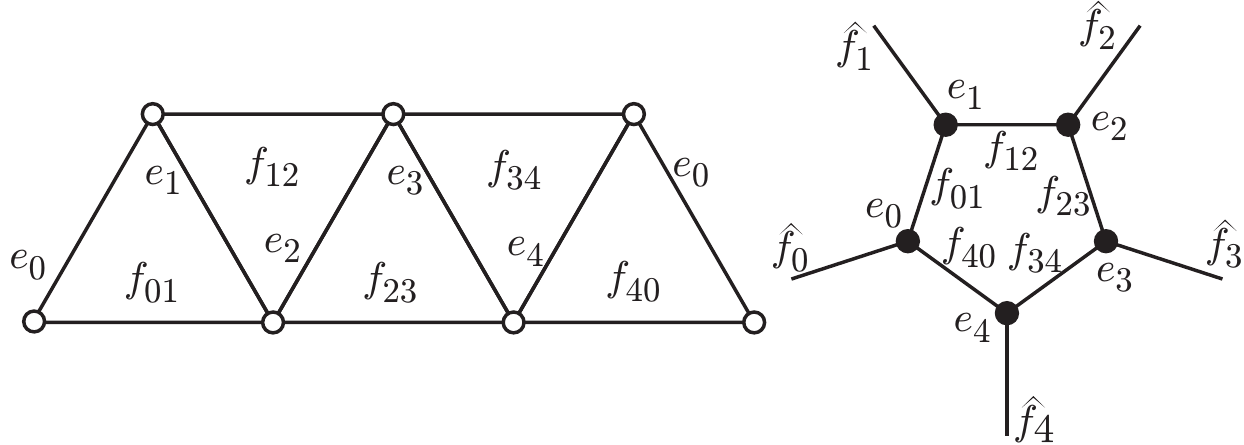}
  \caption{A face path contained in the framework described in
  Example~\ref{K5stresssEx}, with labeling as before and
  $f_{4,0}= \{1,2,5\}$.}
  \label{tense2019moebius01c}
\end{figure}

%
%
%
%


\subsubsection{Self-stressability of face-path $d$-frameworks}

\begin{proposition}\label{path-self-stress-existence}
  Any generic face-path $d$-framework which contains no face-cycle has a
  one-dimen\-sional space of self-stresses.  All the stresses for all the
  planes \ of $F$ and $\hat F$ are either simultaneously zero, or
  simultaneously non-zero.
\end{proposition}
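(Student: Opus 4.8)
The plan is to localize the equilibrium condition at each edge inside a fixed $2$-dimensional vector space and then to build the face-path up one edge at a time, propagating the stress. \emph{Step 1 (local picture at an edge).} Fix an edge $e_i$ and let $N_i\subset\R^{d+1}$ be the orthogonal complement of the direction space of $e_i$; since $\dim e_i=d-1$ we have $\dim N_i=2$. For a face $f$ incident to $e_i$ the vector $\mathbf{n}(e_i,f)$ is a unit vector lying in $N_i$, and, since two distinct $d$-planes both containing the $(d{-}1)$-plane $e_i$ cannot have the same direction space, distinct faces through $e_i$ yield normals on distinct lines of $N_i$. Hence, by genericity, the three normals $\mathbf{n}(e_i,f_{i-1,i})$, $\mathbf{n}(e_i,f_{i,i+1})$, $\mathbf{n}(e_i,\hat f_i)$ are pairwise non-parallel vectors of the plane $N_i$, so any two of them form a basis of $N_i$. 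Consequently the equilibrium condition~\eqref{self-stress-d-framework} at $e_i$,
$$s(f_{i-1,i})\,\mathbf{n}(e_i,f_{i-1,i})+s(f_{i,i+1})\,\mathbf{n}(e_i,f_{i,i+1})+s(\hat f_i)\,\mathbf{n}(e_i,\hat f_i)=\mathbf{0},$$
regarded as a linear condition on the triple $\bigl(s(f_{i-1,i}),\,s(f_{i,i+1}),\,s(\hat f_i)\bigr)$, has a $1$-dimensional solution space, and in every nonzero solution all three stresses are nonzero (if one were zero, independence of the remaining two normals would force the other two to be zero as well). This settles the base case $k=1$.

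\emph{Step 2 (adjoining an edge; induction on $k$).} First observe that genericity together with the absence of a face-cycle implies that the $2k+1$ planes $f_{0,1},\dots,f_{k,k+1},\hat f_1,\dots,\hat f_k$ are pairwise distinct: any coincidence between two of them, combined with a suitable arc of the path joining the two edges involved, yields a face-cycle $d$-framework contained in $\mathcal F$, contrary to hypothesis. Now delete $e_k$ together with the two faces $f_{k,k+1}$ and $\hat f_k$; what remains is a shorter generic face-path $P'$ containing no face-cycle, so by the inductive hypothesis its self-stress space is $1$-dimensional with the stated all-or-nothing property. Restriction of stresses to the faces of $P'$ defines a linear map $\rho$ from the self-stress space of $\mathcal F$ to that of $P'$, since the equilibria of $\mathcal F$ at $e_1,\dots,e_{k-1}$ involve only faces of $P'$. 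The map $\rho$ is injective: a self-stress of $\mathcal F$ vanishing on all faces of $P'$ has in particular $s(f_{k-1,k})=0$, and then the equilibrium at $e_k$ forces $s(f_{k,k+1})=s(\hat f_k)=0$ because $\mathbf{n}(e_k,f_{k,k+1})$ and $\mathbf{n}(e_k,\hat f_k)$ are independent. It is also surjective: any self-stress $s'$ of $P'$ extends, by solving the equilibrium at $e_k$ for $s(f_{k,k+1})$ and $s(\hat f_k)$ in terms of $s'(f_{k-1,k})$ --- a unique solution, as the two coefficient vectors span $N_k$ --- and this does not disturb the other equilibria because $f_{k,k+1}$ and $\hat f_k$ are new faces. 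Hence $\rho$ is an isomorphism and the self-stress space of $\mathcal F$ is $1$-dimensional.

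\emph{Step 3 (the all-or-nothing alternative).} Let $s$ be a nonzero self-stress of $\mathcal F$. Then $\rho(s)$ is a nonzero self-stress of $P'$, so by induction $s$ is nonzero on every face of $P'$; in particular $s(f_{k-1,k})\neq0$, whence the right-hand side $-s(f_{k-1,k})\,\mathbf{n}(e_k,f_{k-1,k})$ of the equilibrium at $e_k$ is a nonzero vector of $N_k$. Expressing it in the basis $\{\mathbf{n}(e_k,f_{k,k+1}),\mathbf{n}(e_k,\hat f_k)\}$ of $N_k$, both coordinates are nonzero, since $\mathbf{n}(e_k,f_{k-1,k})$ is parallel to neither basis vector; hence $s(f_{k,k+1})\neq0$ and $s(\hat f_k)\neq0$, and $s$ is nonzero on all $2k+1$ faces. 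This completes the induction.

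\emph{Main obstacle.} The linear algebra above is entirely routine once Step 1 is set up. The one point that requires genuine care is the distinctness claim opening Step 2: one has to verify that \emph{every} possible coincidence among the $2k+1$ faces --- not only $f_{i,i+1}=f_{j,j+1}$, but also the ones involving an $\hat f_i$ --- really does exhibit a face-cycle $d$-framework inside $\mathcal F$. This is precisely what guarantees that adjoining a new edge introduces two genuinely new stress variables, so that no hidden consistency condition can obstruct the extension.
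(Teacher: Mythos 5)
Your proof is correct and is essentially the paper's argument: stresses are propagated edge by edge using the fact that the three normals at an edge are pairwise non-parallel vectors of the $2$-dimensional orthogonal complement of the edge, so one stress determines the other two uniquely and with non-zero values, your induction on the last edge being just this propagation run in reverse via the restriction isomorphism. The only real difference is that you make explicit (correctly, though only in sketch, by rerouting through the $\hat f_i$'s) the point the paper leaves implicit, namely that the no-face-cycle hypothesis forces all $2k+1$ planes to be pairwise distinct, so the propagation can never assign conflicting values to a repeated plane.
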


\begin{proof}
  Setting  $s(f_{0,1})=1$ we inductively define all stresses for all
  other planes  using Equation~\eqref{self-stress-d-framework}. Therefore a
  non-zero self-stress exists.  By construction the obtained stress is
  non-zero at all planes of $F$ and $\hat F$.

  Once we know any of the stresses at one of the planes of $F$ and $\hat
  F$, we reconstruct the remaining stresses uniquely using
  Equation~\eqref{self-stress-d-framework}. Hence the space of stresses is
  at most one-dimensional.  Therefore, all self-stresses are proportional
  to a self-stress that is non-zero at all planes of $F$ and $\hat F$.
\end{proof}

\subsubsection{Edge-orientation transition}
Suppose we have a face path whose edges are $e_1, e_2, \ldots$, and we
are given an orientation of $e_i$ by declaring a frame spanning $e_i$ as
positive. We pass over this positive orientation on $e_i$ to a positive
orientation on $e_{i+1}$ by requiring that
the given frame of $e_i$ together with $\mathbf{n}(e_i,f_{i,i+1})$ and
a new (chosen to be positive) frame of $e_{i+1}$ together with
$\mathbf{n}(e_{i+1},f_{i,i+1})$
differ by an orientation {reversing} automorphism on $f_{i,i+1}$.
We call this the {\em edge-orientation transition}.

A face-cycle $d$-framework $C(E,F, \hat F,\mathbf{n})$ is said to be
\emph{edge-orientable} if the edge-orientation transition around the cycle
returns to the starting edge in its initial orientation.

    Non-orientable face-cycles are a usual phenomenon in frameworks. Indeed, we see them even in small examples
like Example~\ref{K5stresssEx}.
 Figure~\ref{tense2019moebius01fig} depicts such a face-cycle. Here the
  first and the last edges coincide, but are oppositely oriented.
\begin{figure}[htb]
  \centering
  \includegraphics{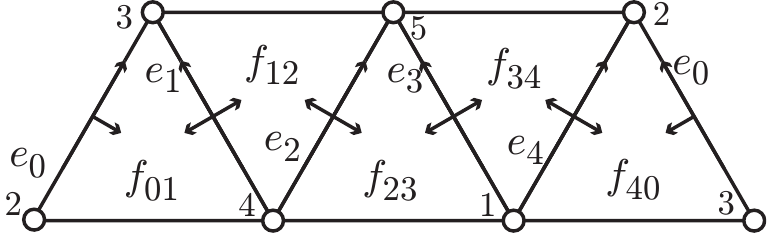}
  \caption{A  face-cycle contained in $K_5$  which is not edge-orientable.  For this particular example faces correspond to triangles, and we choose all the
  normals to  point inward.}
  \label{tense2019moebius01fig}
\end{figure}

Note that the edge-orientability of a cycle depends neither on the choice
of the first element $e_1\in E$, nor the choice of direction in the cycle.
We observe the following proposition.

\begin{proposition}\label{Changes}
A face-cycle $d$-framework $C=C(E,F,\hat F,\mathbf{n})$ has the following
  properties.
\begin{enumerate}\num
  \item\label{or:i} Reversing simultaneously all the normals at a single
    $e_i\in E$ (namely $\mathbf{n}(e_i,f_{i-1,i})$,
    $\mathbf{n}(e_i,f_{i,i+1})$, and $\mathbf{n}(e_i,\hat f_{i})$) does
    not change the self-stressability or orientability of $C$.
  \item\label{or:ii} Reversing simultaneously the normals at $f_{i,i+1}\in
    F$ (namely $\mathbf{n}(e_i,f_{i,i+1})$ and
    $\mathbf{n}(e_{i+1},f_{i,i+1})$) does not change
    the self-stressability or orientability of $C$.
  \item\label{or:iii} Reversing the normal $\mathbf{n}(e_i,\hat f_{i})$ does
    not change the self-stressability or orientability of $C$.
\end{enumerate}
\end{proposition}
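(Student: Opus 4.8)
The plan is to handle the three claims uniformly by tracking how each local sign change affects (a) the defining equilibrium equations \eqref{self-stress-d-framework} at each edge, and (b) the orientation-reversing condition used in the edge-orientation transition. For self-stressability, the key observation is that reversing a collection of normals can always be compensated by a sign change on the stress values of the appropriate faces, so that the space of self-stresses is merely relabeled, not changed in dimension; in particular it is nonzero if and only if it was nonzero before. For orientability, I would note that the edge-orientation transition across a face $f_{i,i+1}$ depends only on the pair of normals $\mathbf{n}(e_i,f_{i,i+1})$ and $\mathbf{n}(e_{i+1},f_{i,i+1})$, and flipping both of them, or flipping one of them together with reorienting the frame on the shared edge, leaves the "orientation-reversing automorphism" condition intact.

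For part \ref{or:ii}: reversing both $\mathbf{n}(e_i,f_{i,i+1})$ and $\mathbf{n}(e_{i+1},f_{i,i+1})$ changes, in the equilibrium equation at $e_i$, only the term $s(f_{i,i+1})\mathbf{n}(e_i,f_{i,i+1})$, whose sign flips; likewise at $e_{i+1}$. Replacing $s(f_{i,i+1})$ by $-s(f_{i,i+1})$ restores both equations, so $s\mapsto s'$ (with $s'$ agreeing with $s$ except $s'(f_{i,i+1})=-s(f_{i,i+1})$) is a linear isomorphism between the self-stress spaces of the two frameworks; it sends nonzero stresses to nonzero stresses. For orientability, the transition across $f_{i,i+1}$ uses the positive frame of $e_i$ together with $\mathbf{n}(e_i,f_{i,i+1})$ and the positive frame of $e_{i+1}$ together with $\mathbf{n}(e_{i+1},f_{i,i+1})$; negating both normals negates the determinant of the change-of-frame map twice, hence does not change whether it is orientation reversing. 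Transitions across the other faces are untouched, so the monodromy around the cycle is unchanged.

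For part \ref{or:i}: reversing all three normals at $e_i$ negates every term of the equilibrium equation \eqref{self-stress-d-framework} at $e_i$, so that equation is preserved as a whole; the equations at all other edges involve $f_{i-1,i}$, $f_{i,i+1}$, $\hat f_i$ through their normals at $e_{i-1}$, $e_{i+1}$, and nowhere else, so they are untouched. Hence the self-stress space is literally unchanged. For orientability, negating $\mathbf{n}(e_i,f_{i-1,i})$ and $\mathbf{n}(e_i,f_{i,i+1})$ flips the transition determinant both when entering $e_i$ (across $f_{i-1,i}$) and when leaving it (across $f_{i,i+1}$); the two sign flips cancel in the round-trip monodromy. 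Part \ref{or:iii} is the easiest: $\hat f_i\notin F$ appears in the equilibrium equation only at $e_i$, as the single term $s(\hat f_i)\mathbf{n}(e_i,\hat f_i)$, so replacing $s(\hat f_i)$ by $-s(\hat f_i)$ compensates the flip and gives an isomorphism of self-stress spaces; and $\hat f_i$ plays no role in any edge-orientation transition (the transitions only use the faces $f_{i,i+1}$ joining consecutive edges), so orientability is trivially preserved.

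The only place that needs genuine care — and what I expect to be the main obstacle — is making the edge-orientation bookkeeping precise: one must fix conventions for what "the positive frame of $e_i$" means after a normal flip and verify that the sign of the relevant change-of-basis determinant behaves as claimed, in particular that the two flips in parts \ref{or:i} and \ref{or:ii} really do cancel rather than reinforce. Once the transition map across a face is written explicitly as a linear map $f_{i,i+1}\to f_{i,i+1}$ (sending one adapted frame to the other) and one checks that multiplying an input or output normal by $-1$ multiplies its determinant by $-1$, the cancellations are immediate and all three parts follow.
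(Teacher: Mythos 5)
Your proof is correct and takes essentially the same route as the paper: self-stressability is preserved because each flip is absorbed either into the (negated) equilibrium equation at $e_i$ in item \ref{or:i} or into a sign change of the stress $s(f_{i,i+1})$, resp.\ $s(\hat f_{i})$, in items \ref{or:ii} and \ref{or:iii}, while orientability is preserved because in every case an even number (possibly zero) of normals attached to faces of $F$ is reversed, so the sign changes of the edge-orientation transitions cancel around the cycle. Your determinant bookkeeping merely spells out this parity argument in more detail.
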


\begin{proof}
  In all the items we change altogether an even number of normals for all
  the faces in $F$.  Therefore, orientability is preserved.

  The change in \ref{or:i} does not change the equations of
  self-stressability, so it preserves self-stressability.  For \ref{or:ii}
  and \ref{or:iii} the change of the signs of stresses $s(f_{i,i+1})$ and
  $s(\hat f_i)$ respectively delivers the equivalence of the conditions of
  self-stressability.
\end{proof}

\subsubsection{Self-stressability of face-cycles of length 3}
Let us consider a  trivalent cycle  $C(E,F,\hat
F,\mathbf{n})$ of length $3$ with $$F=\{f_1,f_2,f_3\}, \hat
F=\{\hat f_1, \hat f_2, \hat f_3\}, E=\{e_1,e_2,e_3\} $$ (for a schematic sketch see
Figure~\ref{fig:triangle}).
Create a new  plane $\hat f'_3$ by the following
 Cayley algebra algorithm (see
Figure~\ref{fig:harmonic}):

(i)~$g_1 = (e_1 \vee e_2) \wedge \hat f_3$,

(ii)~$g_2 = f_{2, 3} \wedge \hat f_1$,

(iii)~$g_3 = (g_1 \vee g_2) \wedge f_{3, 1}$,

(iv)~$g_4 = (g_2 \vee e_1) \wedge (g_3 \vee e_2)$,

(v)~$\hat f'_3 = e_3 \vee g_4$.

In this notation, the stressability conditions are given by the following.
\begin{proposition}\label{three-cycle}
  A face-cycle $d$-framework
  $$
  C\big((e_1,e_2,e_3), (f_{1,2},f_{2,3},f_{3,1}),
  (\hat f_{1}, \hat f_2, \hat f_3),\mathbf n\big)
  $$
  is self-stressable if and only if
    $$\dim(\hat f_1\cap \hat f_2\cap \hat  f_3) = d - 1$$
if $C$ is edge-orientable, and
$$    \dim(\hat f_1\cap \hat f_2\cap \hat f'_3) = d - 1$$
if $C$ is non-edge-orientable,
  where $\hat f'_3$ is constructed as above in step~(v) (see also
  Figure~\ref{fig:harmonic}).
\end{proposition}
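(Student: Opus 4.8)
The plan is to reduce self-stressability of the length-$3$ cycle to the vanishing of a scalar \emph{stress monodromy}, to compute that scalar from the normals at the three edges, and to recognize the equation ``monodromy $=1$'' as the asserted projective incidence — the Cayley-algebra construction of $\hat f'_3$ being exactly what compensates for the edge-orientation twist. Throughout, $C$ is taken to be generic in the sense of the paper. \emph{Reduction to a monodromy.} Let $P$ be the length-$2$ sub-face-path of $C$ carried by $e_1,e_2$, with faces $f_{3,1},f_{1,2},f_{2,3}$ and pendant faces $\hat f_1,\hat f_2$; genericity of $C$ makes $P$ a generic face-path with no face-cycle. By Proposition~\ref{path-self-stress-existence}, $P$ has a one-dimensional space of self-stresses, all of whose values are nonzero; normalize the one with $s(f_{3,1})=1$, which then determines the stresses on $f_{1,2},f_{2,3},\hat f_1,\hat f_2$. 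Now transport this stress across $e_3$: the equilibrium~\eqref{self-stress-d-framework} at $e_3$ (involving $f_{2,3},f_{3,1},\hat f_3$) has, generically, a unique solution for the pair consisting of the value reassigned to $f_{3,1}$ and the value on $\hat f_3$; call the former $\mu=\mu(C)\in\R$. A nonzero self-stress of $C$ is precisely a self-stress of $P$ surviving this transport with $\mu=1$, and any nonzero self-stress of $C$ restricts to a self-stress of $P$ that, being nonzero on $P$, is nonzero on $f_{3,1}$ (Proposition~\ref{path-self-stress-existence}); hence $C$ is self-stressable if and only if $\mu(C)=1$.

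\emph{Computing $\mu$.} Fix an edge $e_i$. Its three incident faces are $d$-planes containing $e_i$, so their directions all contain the direction of $e_i$ and project to three lines in the $2$-plane $Q_i$ orthogonal to $e_i$, while the normals $\mathbf n(e_i,\cdot)$ already lie in $Q_i$; thus~\eqref{self-stress-d-framework} at $e_i$ is a linear dependence of three vectors of $Q_i$. Since the pendant stress $s(\hat f_i)$ occurs in no other equation, eliminating it turns that equation into a single scalar relation between $s(f_{i-1,i})$ and $s(f_{i,i+1})$ with coefficients $2\times2$ determinants in $Q_i$ of pairs among $\mathbf n(e_i,f_{i-1,i}),\mathbf n(e_i,f_{i,i+1}),\mathbf n(e_i,\hat f_i)$. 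Running the transport of the previous paragraph through these three relations and letting the $f_{1,2},f_{2,3}$ contributions telescope, $\mu(C)$ becomes a product of three such determinant ratios, one per edge. The only subtlety is that $Q_1,Q_2,Q_3$ are a priori unrelated \emph{oriented} planes: the edge-orientation transition carries an orientation of $Q_i$ to one of $Q_{i+1}$, and making one full turn returns a coherent orientation exactly when $C$ is edge-orientable, and the opposite one otherwise. Hence $\mu(C)=(-1)^{\varepsilon}\rho$, where $\rho$ is the product of the three determinant ratios computed with the transported orientations and $\varepsilon\in\{0,1\}$ equals $0$ in the edge-orientable case and $1$ otherwise. (By Proposition~\ref{Changes} neither $\rho$ nor the truth of ``$\mu(C)=1$'' depends on the chosen normals.)

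\emph{Cayley-algebra interpretation.} Rewriting the determinant ratios in Grassmann–Cayley bracket notation turns $\rho$ into a ratio of brackets in $e_1,e_2,e_3,f_{1,2},f_{2,3},f_{3,1},\hat f_1,\hat f_2,\hat f_3$. In the edge-orientable case standard bracket manipulations identify $\rho=1$ with degeneracy of the meet $\hat f_1\wedge\hat f_2\wedge\hat f_3$, i.e.\ with $\dim(\hat f_1\cap\hat f_2\cap\hat f_3)=d-1$; for $d=1$ this is just ``the three pendant lines at the vertices of the triangle are concurrent'', the classical planar criterion, which is a useful sanity check. In the non-edge-orientable case the extra $(-1)^{\varepsilon}=-1$ replaces, inside the bracket expression, the contribution of $\hat f_3$ by a sign-corrected (harmonic-conjugate-type) partner; tracing what this substitution does geometrically reproduces steps (i)--(v) exactly: (i)--(ii) build the auxiliary points $g_1,g_2$ forced by the equilibria at $e_1$ and $e_2$, (iii)--(iv) transport them across the closing face $f_{3,1}$ to the point $g_4$, and (v) sets $\hat f'_3=e_3\vee g_4$; so $\mu(C)=1$ becomes $\dim(\hat f_1\cap\hat f_2\cap\hat f'_3)=d-1$. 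Running the same recipe on an edge-orientable $C$ returns $\hat f'_3=\hat f_3$, so the two clauses of the statement are uniform.

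The principal obstacle is the orientation bookkeeping of the last two steps: showing that the failure of edge-orientability contributes exactly one sign to the monodromy, and that the explicit meet–join recipe for $\hat f'_3$ is precisely the thing that absorbs that sign — equivalently, matching the number $\mu(C)$ with the Grassmann–Cayley bracket that tests degeneracy of $\hat f_1\wedge\hat f_2\wedge\hat f'_3$. Once $\mu(C)=(-1)^\varepsilon\rho$ is pinned down and $\rho$ is matched with the planar concurrency bracket, the remaining work is routine (if somewhat lengthy) linear and bracket algebra.
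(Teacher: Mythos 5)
Your reduction of self-stressability to the scalar condition $\mu(C)=1$ is sound (it is the same monodromy mechanism the paper uses elsewhere, and your use of Proposition~\ref{path-self-stress-existence} to rule out stresses vanishing on $f_{3,1}$ is fine). But the actual content of the proposition is the translation of ``$\mu(C)=1$'' into the two stated geometric conditions, and that translation is asserted, not proved. In the edge-orientable case you write that ``standard bracket manipulations identify $\rho=1$ with degeneracy of the meet $\hat f_1\wedge\hat f_2\wedge\hat f_3$,'' but you never set up a mechanism for comparing the three per-edge determinant ratios, which live in three \emph{different} $2$-planes $Q_1,Q_2,Q_3$, against a single incidence statement about the hyperplanes $\hat f_1,\hat f_2,\hat f_3$. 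This is precisely the step the paper spends its effort on: it observes that all six planes contain $\Pi=f_{1,2}\cap f_{2,3}\cap f_{3,1}$ (so $\dim(\hat f_1\cap\hat f_2\cap\hat f_3)\ge d-2$ automatically), uses projective invariance of self-stressability to send $\Pi$ to infinity, cuts with a $2$-plane $\pi$ orthogonal to the now-parallel edges, and only then invokes the known planar concurrency criterion. Your proposal verifies nothing beyond the $d=1$ ``sanity check,'' so the general-$d$ equivalence $\rho=1\iff\dim(\hat f_1\cap\hat f_2\cap\hat f_3)=d-1$ is simply not established. Moreover, your decomposition $\mu(C)=(-1)^{\varepsilon}\rho$ is not pinned down: each ratio $-\det(\mathbf n(e_i,f_{i-1,i}),\mathbf n(e_i,\hat f_i))/\det(\mathbf n(e_i,f_{i,i+1}),\mathbf n(e_i,\hat f_i))$ is already independent of any orientation of $Q_i$ (both determinants flip together), so what your orientation-transported $\rho$ is, and why it is independent of the choice of normals, requires a definition and an argument; Proposition~\ref{Changes} gives invariance of self-stressability and of orientability, not of a particular scalar $\rho$.

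The gap is worst exactly where the proposition is hardest: the non-edge-orientable clause. You claim that the extra sign ``replaces the contribution of $\hat f_3$ by a sign-corrected (harmonic-conjugate-type) partner'' and that ``tracing what this substitution does geometrically reproduces steps (i)--(v) exactly,'' but no such tracing is carried out, and your gloss on the steps is guesswork (step (i), $g_1=(e_1\vee e_2)\wedge\hat f_3$, is not ``forced by the equilibrium at $e_1$''). What must be shown is that flipping one normal can be compensated by replacing $\hat f_3$ with the specific plane through $e_3$ that is the harmonic conjugate of $\hat f_3$ with respect to $f_{2,3},f_{3,1}$, and that steps (i)--(v) construct exactly that plane — this is the substance of the second half of Proposition~\ref{three-cycle}, which the paper handles by the explicit sign-change/parallelogram argument of Figure~\ref{fig:triangle} followed by reduction to the orientable case. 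Your closing admission that matching $\mu(C)$ with the bracket testing $\hat f_1\wedge\hat f_2\wedge\hat f'_3$ is ``the principal obstacle'' left to ``routine (if somewhat lengthy) linear and bracket algebra'' concedes the point: the decisive identifications are missing, so as it stands this is an outline of a genuinely different (all-bracket, no projective reduction) route rather than a proof.
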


\begin{figure}[htb]
  \centering
  \begin{overpic}[width=.8\textwidth]{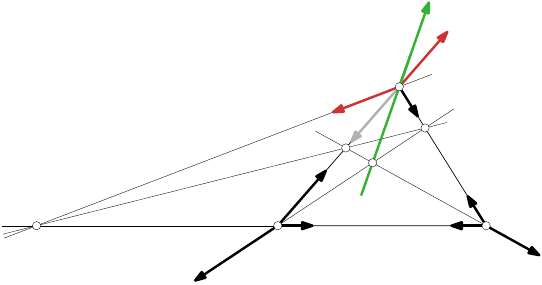}
    \put(88,8){\small$e_1$}
    \put(51,8){\small$e_2$}
    \put(70,38){\small$e_3$}
    \put(61,27){\small$g_2$}
    \put(69,19){\small$g_4$}
    \put(80,27){\small$g_3$}
    \put(5,8){\small$g_1$}
    \put(95,10){\small$\hat f_1$}
    \put(62,35){\small$\hat f_3$}
    \put(73,46){\small$\hat f'_3$}
  \end{overpic}
  \caption{
  The plane $\hat f'_3$ can be constructed using Cayley
  algebra since the two lines $f_{2, 3}, f_{3, 1}$ separate the two lines $\hat
  f_3, \hat f'_3$ harmonically.
  }
  \label{fig:harmonic}
\end{figure}

\begin{proof}
Let us first examine the case where $d=1$ and $D=2$,
with stresses at all edges of the triangles equal to $1$.
The triangle in Figure~\ref{fig:triangle0} (left) corresponds to an
edge-oriented $d$-framework and therefore
$
\hat f_1\cap \hat f_2 \cap \hat f_3
$
is not empty.
\begin{figure}[htb]
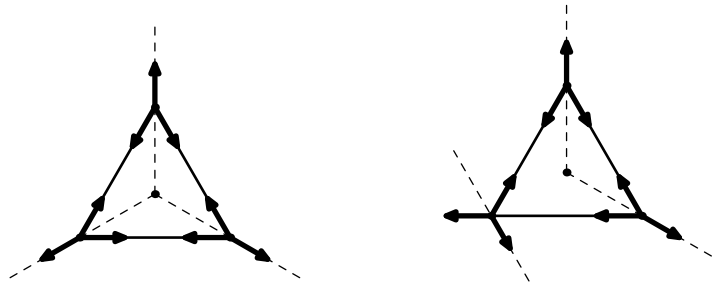

  \includegraphics{mtens-pic04.mps}
  \qquad\qquad
  \includegraphics{mtens-pic05.mps}  	
  \caption{Orientable (\emph{left}) and non-orientable (\emph{right})
  self-stressed $d$-frameworks.}
  \label{fig:triangle0}
\end{figure}
Note that this intersection is empty for the triangle in
Figure~\ref{fig:triangle0} (right), which corresponds to a non-edge-oriented
choice of normals.  The condition for non-edge-oriented tensegrities is
more complicated as we will see below.

Let us consider a self-stressed trivalent cycle  $C(E,F,\hat
F,\mathbf{n})$ of length $3$ (for a schematic sketch see
Figure~\ref{fig:triangle}). Then let us change the direction of the
normal $\mathbf{n}(e_3, f_{2,3})$.  Consequently, its orientability changes
and the old stress for this new cycle is not a self-stress.  However, by
changing the $d$-plane $\hat f_3$ to a new $d$-plane $\hat f'_3$ we can
resolve the stresses around $e_3$ again to reobtain a self-stressed
framework.
\begin{figure}[htb]
  \centering
  \begin{overpic}[width=.9\textwidth]{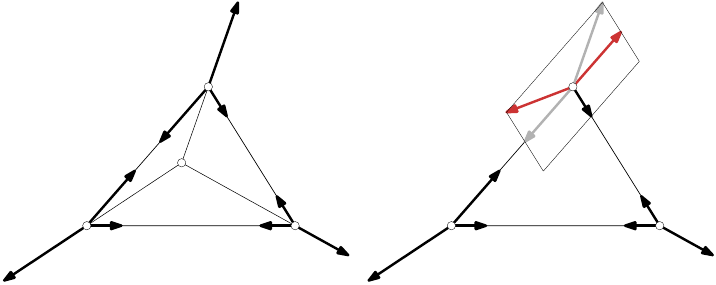}
    \put(18,19){\rotatebox{49}{\tiny$\mathbf{n}(e_3, f_{2,3})$}}
    \put(40,5){\small$e_1$}
    \put(11,5){\small$e_2$}
    \put(30,27){\small$e_3$}
    \put(24,9.5){\small$f_{1, 2}$}
    \put(36,18){\small$f_{3, 1}$}
    \put(27,34){\small$\hat f_3$}
    \put(79,35){\small$\hat f_3$}
    \put(73,27){\small$\hat f'_3$}
    \put(81,26){\rotatebox{49}{\tiny$\mathbf{n}(e_3, f_{2,3})$}}
  \end{overpic}
  \caption{Self-stressed orientable and non-orientable face-cycles of
  length $3$.
  \emph{Left}: An orientable face-cycle.
  \emph{Right}: Reversing the normal $\mathbf{n}(e_3, f_{2,3})$ yields a
  non-orientable face-cycle which is still self-stressable after replacing
  $\hat f_3$ by $\hat f'_3$.
  }
  \label{fig:triangle}
\end{figure}

Let us consider the following two cases in the planar situation as
depicted by Figure~\ref{fig:triangle}.
The classical tensegrity (Figure~\ref{fig:triangle}), which
corresponds to
the orientable case, has the property that the lines $\hat f_1,
\hat f_2, \hat f_3$ meet in a point (see, e.g., ~\cite{Kar2018}). Now
changing the orientation of $\mathbf n(e_3, f_{2,3})$ yields $\hat f'_3$
as the new $d$-plane (see Figure~\ref{fig:triangle} right). From the
parallelogram in Figure~\ref{fig:triangle} (right) we derive the
condition for the \emph{non-orientable} case.  Standard projective geometry
implies~\cite{richter-gebert-2011}
that the two lines $f_{2, 3}, f_{3, 1}$
separate the two lines $\hat f_3, \hat f'_3$ harmonically. This property
is characterized by incidence relations of points and lines and therefore
expressible in terms of Cayley algebra.


Next we describe how to reduce any dimension $d$ to the above one-dimensional case.
Denote by $\Pi$  the intersection
$
\Pi=f_{1,2}\cap f_{2,3}\cap f_{3,1}
$.
Observe that $\dim \Pi =d-2$.
It it well known that self-stressability is a projective invariant, see~\cite{white1983algebraic},
so we can consider the plane $\Pi$ to be at infinity.

Fix a two-dimensional plane $\pi$ orthogonal to  $e_{1}$, $e_{2}$, and $e_{3}$
(this is possible since $\Pi$ is at infinity).

Now the face-cycle $d$-framework $C$ is a Cartesian product of
$\R^{d-3}$ with the
two-dimensional tensegrity  $\mathcal{F}=C\cap \pi$ in the plane $\pi$
(see Figure~\ref{mtens-pic01}).
Stresses of $\mathcal{F}$ are in a bijection with the stresses of the initial tensegrity.
So the problem is reduced to the planar situation, i.e., to a
$1$-framework in the two-dimensional plane.

  Let us now consider the edge-orientable case.  According to
  Proposition~\ref{Changes} 
   the problem has been reduced to the case of normals in
  Figure~\ref{mtens-pic01}.  The necessary and sufficient condition in the
  plane is that the three lines
$$
\hat f_1\cap \pi, \qquad
\hat f_2\cap \pi, \quad
\hat f_3\cap \pi,
$$
intersect in one point, say $a$ (see, e.g., in~\cite{Kar2018}).
Therefore $C$ is self-stressable if and only if the three planes $\hat f_1$, $\hat f_2$, and $\hat f_3$ intersect in
a common $(d{-}2)$-plane (i.e., the plane that spans $a$ and $\Pi$).
\begin{figure}[htb]
  \includegraphics{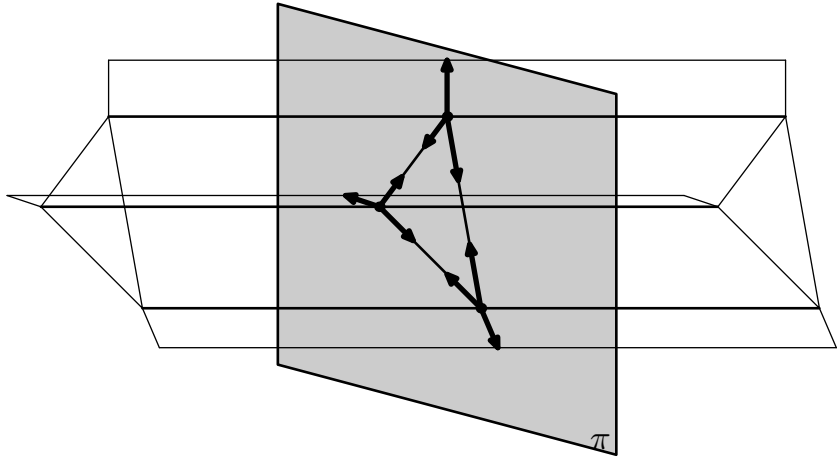}		
  \caption{A face-cycle $d$-framework $C$ and the corresponding tensegrity
  $C \cap \pi$.}
	\label{mtens-pic01}
\end{figure}

The non-edge-orientable case is reduced to the edge-orientable in the following way.
Let us make our three-cycle orientable by changing the last normal $n(e_3,f_3)$ (denote the resulting set of normals by $\mathbf n'$).
In this case, in order to preserve the property of self-stressability condition for at the edge $e_3$,
we should also change the sign of one of the coordinates for the plane $\hat f_3$.
The resulting plane is the plane $\hat f_3'$, whose Cayley algebra expression is described above (see step (v)).
Now the stressability of the original non-edge-orientable cycle is equivalent to the stressability of
an edge-oriented cycle
$$
  C\big((e_1,e_2,e_3), (f_{1,2},f_{2,3},f_{3,1}),
  (\hat f_{1}, \hat f_2, \hat f_3'),\mathbf n'\big).
$$
This concludes the proof.
\end{proof}

\subsection{$\hf$-surgeries}\label{stress-s4}
In this section we discuss $\hf$-surgeries and elementary surgery-flips on
face-paths $d$-frameworks and face-cycle $d$-frameworks which preserve self-stressability in $\R^{d+1}$.

\begin{definition}\label{HF-def}
Let $1 \le i\le n$ be positive integers ($n\ge 4$), and let
$$
C=\big((e_1,\ldots,e_n), (f_{0,1},f_{1,2},\ldots, f_{n,1}),
(\hat f_{1},\ldots, \hat f_n),\mathbf{n})\big)
$$
be a face-path (or a face-cycle if $f_{n,1}=f_{0,1}$) $d$-framework.
Denote
$$
\hat f'_i=\langle f_{i-1,i}\cap f_{i,i+1}, \hat f_i\cap \hat f_{i+1} \rangle.
$$
We say that the \emph{$\hf_i$-surgery} of $C$ is the following
face-path (face-cycle) $d$-framework
(see Figure~\ref{figures:definition-HF})
$$
\begin{array}{rcl}
  \hf_i(C)
  =
  \big((e_1,\ldots,e_{i-1},
  &
  f_{i-1,i}\cap f_{i+1,i+2},
  &
  e_{i+2},\ldots, e_n)
  \\
  (f_{1,2},\ldots, f_{i-1,i},
  &&
  f_{i+1,i+2},\ldots, f_{n,1});
  \\
  (\hat f_1,\ldots,\hat f_{i-1},
  &
  \hat f'_{i},
  &
  \hat f_{i+2},\ldots, \hat f_n),\mathbf{n}'\big).
\end{array}
$$
\begin{figure}[htb]
\centerline{\includegraphics[width=.7\linewidth]{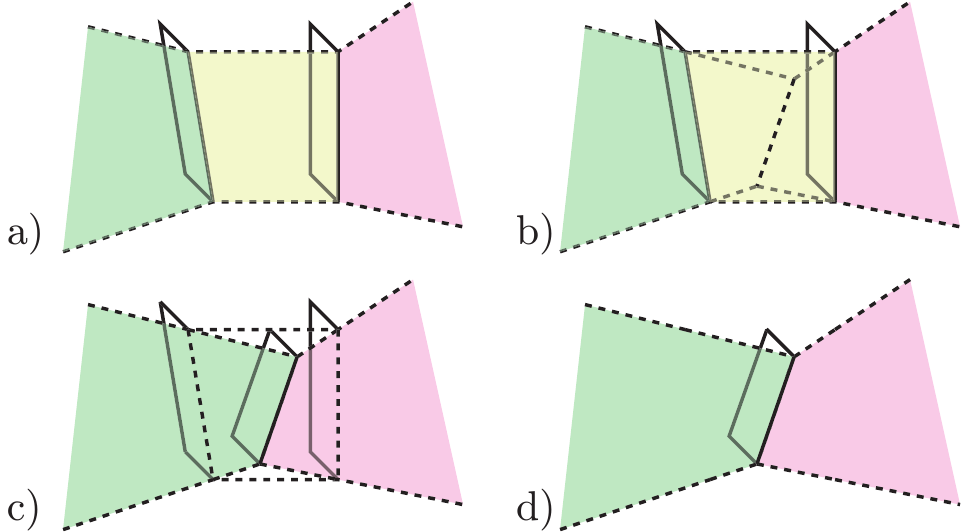}}
\caption{An $\hf_i$-surgery.}\label{figures:definition-HF}
\end{figure}
The normals  $\mathbf{n}'$ coincide with the normals of
$\mathbf{n}$ for the same  adjacent pairs.
We have three extra normals in $\mathbf{n}'$ to the new element in $E$:
$$
\begin{array}{c}
\mathbf{n}'(f_{i-1,i}\cap f_{i,i+1}, f_{i-1,i}),
\quad
\mathbf{n}'(f_{i-1,i}\cap f_{i,i+1}, f_{i,i+1}),
\quad \hbox{and}\\
\mathbf{n}'(f_{i-1,i}\cap f_{i,i+1}, \hat f_{i}).
\quad
\end{array}
$$
The first two are defined by the fact that the cycle:
$$
\Big((e_i,f_{i-1,i}\cap f_{i,i+1},e_{i+1}),
(f_{i-1,i},f_{i+1,i+2},f_{i,i+1}),
(\hat f_i,\hat f'_i,\hat f_{i+1}), \mathbf{n}'
\Big)
$$
of length 3 is edge-orientable.
The orientation of $\hat f_i$ does not play any role here
(and hence can be chosen arbitrarily).
\end{definition}

Alternatively, in terms of Cayley algebra $\hat f'_i$ reads
$$
\hat f'_i=(f_{i-1,i}\wedge f_{i,i+1})\vee (\hat f_i\wedge \hat f_{i+1}).
$$

In the planar case we have precisely $\hf$-surgeries on framed cycles
(i.e., face-cycle $1$-frameworks) that were used for the conditions of
planar tensegrities (for further details see~\cite{Kar2018}).

In order to have a well-defined $\hf$-surgery, one should consider several simple conditions
on the elements of $F$ and $\hat F$.

\begin{definition}
  We say that an $\hf_i(C)$-surgery is \emph{admissible} if
  \begin{enumerate}\num
    \item\label{itttm:i} the planes $f_{i-1,i}$ and $f_{i,i+1}$ do not coincide;
    \item\label{itttm:ii} the planes $\hat f_i$ and $\hat f_{i+1}$ do not coincide;
    \item\label{itttm:iii} the planes $f_{i-1,i}\wedge f_{i,i+1}$ and
      $\hat f_i\wedge \hat f_{i+1}$ do not coincide.
  \end{enumerate}
\end{definition}

For an admissible $\hf_i(C)$-surgery we have
\begin{eqnarray*}
\dim f_{i-1,i}\cap f_{i+1,i+2}&=&d-1;
\\\dim \hat f'_i&=&d.
\\ 
   \dim \hat f_{i}\cap \hat f_{i+1}&=&d-1.
\end{eqnarray*}
where the last follows from Items~\ref{itttm:i} and~\ref{itttm:ii} above.
Then by Item~\ref{itttm:iii} we have $\dim \hat f'_i\ge d$.
Since the $d$-planes $f_{i-1,i}$, $f_{i+1,i+2}$, $\hat f_{i}$, and $\hat f_{i+1}$ by construction
share a $(d-2)$-plane, we have $\dim \hat f'_i\le d$.

Let us distinguish the following elementary surgery-flips.

\begin{definition}
An \emph{elementary surgery-flip} is one of the following surgeries.
\begin{itemize}
\item An admissible $\hf_i$-surgery or its inverse.

\item Removing or adding consecutive duplicates at position $i$.
Here we say that we have \emph{a duplicate at position $i$} if
$$
e_i=e_{i+1}, \quad
f_{i,i+1}=f_{i+1,i+2},
\quad \hbox{and} \quad
\hat f_i=\hat f_{i+1}.
$$
\item Removing or adding a loop of length $2$.
Here we say that we have \emph{a simple loop of length $2$ at position $i$} if
$$
e_i=e_{i+2}, \quad
f_{i,i+1}=f_{i+2,i+3},
\quad \hbox{and} \quad
\hat f_i=\hat f_{i+2}.
$$
\end{itemize}
\end{definition}

\begin{proposition}
Assuming that a surgery is admissible,
a face-cycle $d$-framework $C$ is self-stressable if and only if the face-cycle
$d$-framework $\hf_i(C)$ is  self-stressable.
\end{proposition}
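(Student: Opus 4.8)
\medskip
\noindent\textbf{Proof plan.}
The plan is to exploit that the $\hf_i$-surgery is \emph{local}. The frameworks $C$ and $\hf_i(C)$ coincide outside a small neighbourhood of the surgery: they share all edges and faces except that $e_i,e_{i+1},f_{i,i+1},\hat f_i,\hat f_{i+1}$ get replaced by the single new edge $e':=f_{i-1,i}\cap f_{i+1,i+2}$ and the single new face $\hat f'_i=\langle f_{i-1,i}\cap f_{i,i+1},\ \hat f_i\cap\hat f_{i+1}\rangle$, and the equilibrium equation~\eqref{self-stress-d-framework} at every edge $e_j$ with $j\neq i,i+1$ is word for word the same in $C$ as in $\hf_i(C)$. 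Thus a self-stress of either framework amounts to a function on the shared faces that satisfies the shared equations plus some matching condition across the modified region, and the statement reduces to comparing two ``local pieces'' attached along the two faces $f_{i-1,i}$ and $f_{i+1,i+2}$ that bound this region.

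First I would encode each local piece by a \emph{transition isomorphism}. The portion of $C$ consisting of $e_i,e_{i+1}$ with the faces $f_{i-1,i},f_{i,i+1},f_{i+1,i+2},\hat f_i,\hat f_{i+1}$ is a generic face-path $d$-framework with no face-cycle, so by Proposition~\ref{path-self-stress-existence} its self-stress space is one-dimensional and nowhere zero; prescribing the value at $f_{i-1,i}$ therefore determines a linear isomorphism $t_C\colon\R\to\R$ to the value at $f_{i+1,i+2}$ (and fixes the values at $f_{i,i+1},\hat f_i,\hat f_{i+1}$). Likewise the portion of $\hf_i(C)$ consisting of $e'$ with the faces $f_{i-1,i},f_{i+1,i+2},\hat f'_i$ is a generic face-path $d$-framework of length one, giving a linear isomorphism $t_{\hf}\colon\R\to\R$ from the value at $f_{i-1,i}$ to the value at $f_{i+1,i+2}$ (and fixing the value at $\hat f'_i$). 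Here admissibility, together with genericity so that the three faces at each of $e_i,e_{i+1},e'$ are distinct with pairwise independent normals, is exactly what makes Proposition~\ref{path-self-stress-existence} applicable. Granting $t_C=t_{\hf}$, the proposition is immediate: given a self-stress $s$ of $C$, keep its values on all shared faces and let the value on $\hat f'_i$ be the one forced by the $e'$-equation; this is legal precisely because $s$ already satisfies $s(f_{i+1,i+2})=t_C(s(f_{i-1,i}))=t_{\hf}(s(f_{i-1,i}))$, and it yields a self-stress of $\hf_i(C)$. The converse is symmetric, and since all the maps involved are isomorphisms the resulting stress is nonzero whenever $s$ is; if $s(f_{i-1,i})=0$ then $s$ vanishes on the whole modified region and one simply sets the new value to $0$.

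It remains to prove $t_C=t_{\hf}$, and for this I would use the auxiliary length-$3$ cycle
$$
T=\Big((e_i,e',e_{i+1}),\ (f_{i-1,i},f_{i+1,i+2},f_{i,i+1}),\ (\hat f_i,\hat f'_i,\hat f_{i+1}),\ \mathbf{n}'\Big)
$$
that appears in Definition~\ref{HF-def}. Its normals at $e_i$ and $e_{i+1}$ are those of $C$, its normal at $e'$ is that of $\hf_i(C)$, and by construction $T$ is edge-orientable. Since $\hat f'_i\supseteq\hat f_i\cap\hat f_{i+1}$ and $\dim(\hat f_i\cap\hat f_{i+1})=d-1$ by admissibility~\ref{itttm:i}--\ref{itttm:ii}, we get $\hat f_i\cap\hat f'_i\cap\hat f_{i+1}=\hat f_i\cap\hat f_{i+1}$ of dimension $d-1$, so Proposition~\ref{three-cycle} shows that $T$ is self-stressable. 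Pick a nonzero self-stress $\sigma$ of $T$; then $\sigma(f_{i-1,i})\neq 0$, since otherwise the equations of $T$ at $e_i,e',e_{i+1}$ would propagate $0$ through every face. Now the equations of $T$ at $e_i$ and $e_{i+1}$ are literally the two $C$-equations evaluated on $\sigma(f_{i-1,i}),\sigma(f_{i,i+1}),\sigma(f_{i+1,i+2}),\sigma(\hat f_i),\sigma(\hat f_{i+1})$, whence $\sigma(f_{i+1,i+2})=t_C(\sigma(f_{i-1,i}))$; and the equation of $T$ at $e'$ is literally the $\hf_i(C)$-equation, whence $\sigma(f_{i+1,i+2})=t_{\hf}(\sigma(f_{i-1,i}))$. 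Since $\sigma(f_{i-1,i})\neq 0$ and $t_C,t_{\hf}$ are linear, $t_C=t_{\hf}$.

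The hard part will be the bookkeeping of normals underlying the passage to $T$: one has to check that $\mathbf{n}'$ restricted to $e_i,e_{i+1}$ reproduces verbatim the equilibrium equations of $C$ there, and that at $e'$ it reproduces that of $\hf_i(C)$ — which is exactly where one uses that $\mathbf{n}'$ agrees with $\mathbf{n}$ on shared incident pairs and is normalized at $e'$ so as to make $T$ edge-orientable — and one has to verify that the admissibility conditions genuinely force the relevant triples of normals to be pairwise independent, so that Proposition~\ref{path-self-stress-existence} applies and the transition maps are isomorphisms rather than merely linear. The inverse $\hf_i^{-1}$-surgery, and the face-path version of the statement, are handled by the very same local argument.
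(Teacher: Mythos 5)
Your proof is correct and is essentially the paper's argument: both rest on the auxiliary edge-orientable length-$3$ cycle of Definition~\ref{HF-def}, shown self-stressable via Proposition~\ref{three-cycle} because its three hat-planes meet in a $(d-1)$-plane, combined with the locality of the surgery (all equilibrium equations away from $e_i,e_{i+1},e'$ are unchanged). The only difference is cosmetic: the paper superposes a suitably scaled self-stress of this triangle onto $s$ to cancel the disappearing faces, whereas you use the same triangle's self-stress to identify the two local stress-transition maps $t_C=t_{\hf}$ and then transplant $s$ directly.
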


\begin{proof}
Assume that $C$ has a non-zero self-stress $s$.
Let us show that $\hf_i(C)$ has a self-stress.

Consider the face-cycle $d$-framework
$$
C_i=\big((e_i,f_{i-1,i}\cap f_{i,i+1}),
(f_{i-1,i},f_{i+1,i+2},f_{i,i+1}),
(\hat f_i,\hat f'_{i},\hat f_{i+1}), \mathbf{n} \big),
$$
where
$$
\hat f'_i=\big\langle f_{i-1,i}\cap f_{i,i+1}, \hat f_i\cap \hat f_{i+1} \big\rangle.
$$
and $\mathbf{n}$ is constructed according Definition~\ref{HF-def}.
This face-cycle $d$\dash framework admits a self-stress by Proposition~\ref{three-cycle}
since three $d$-planes of $(f_i,f'_{i},f_{i+1})$
intersect in a plane of dimension $d-2$.
Now let us add $C_i$ to $C$ taking the self-stress $s_i$ which negates the stress at $e_{i,i+1}$.
Then the stresses at $\hat f_i$ for $C$ and $C_i$ negate each other;
and the stresses at $f_{i-1,i}$ for $C$ and $C_i$ coincide.
For the same reason the stresses at $f_{i+1,i+2}$ for $C$ and $C_i$ coincide.
Therefore, the constructed self-stress is in fact a non-zero self-stress on $\hf_i(C)$.

The same reasoning works for the converse statement. In fact adding the $C_i$ to $C$ provides an
isomorphism between the space of self-stresses on $C$ and the space of self-stresses on $\hf_i(C)$.
\end{proof}

\begin{remark}\label{HI-condition} It is possible to describe one $\hf$-surgery in terms of Cayley algebra.
Consider a face-cycle $d$-framework $C$ with admissible $\hf_i(C)$-surgery.
We have only one new plane $\hat f'_i$ in this case, and its Cayley expression is
$$
\hat f'_i=(f_{i-1,i}\wedge f_{i,i+1})\vee (e_i\wedge e_{i+1}).
$$
\end{remark}


\subsection{Stress transition and stress monodromy}\label{stress-s5}
We will now adapt to our setting the notion of ``quality transfer'' due to Rybnikov ~\cite{Rybnikov991}.

\begin{definition}
Let $\Gamma$ be a generic face-path $d$-framework with starting plane
  $f_a\in F$ and ending plane $f_{z}\in F$.
Assign some stress $s$ to the first plane. Due to genericity, it uniquely defines the stress on the second face. The stress on the second face
uniquely defines the stress on the third face, and so on. So the stress on $f_a$ uniquely defines the stress on $f_z$.
This is called the \emph{stress transition} along the face path.

If $f_a=f_z$, that is, we have a face-cycle, we arrive eventually at  some
  stress $s'$ assigned to $f_a$ again.
The ratio
$s(f_{a})/s(f_{z})$
is called the \emph{stress-monodromy} along $C$. A stress monodromy of 1 is
  \emph{trivial}.
\end{definition}

It is clear that:

\begin{lemma}\label{LemmaMonodr}
\begin{enumerate}
\item A generic face-cycle is self-stressable if and only if the stress monodromy is trivial.

\item The monodromy does not depend on the choice of the first face.

\item Reversal of the direction of the cycle takes monodromy $m$ to $1/m$.

\item Monodromy behaves multiplicatively with respect to homological addition: the monodromy of the homological sum is the product of monodromies.\qed
\end{enumerate}
\end{lemma}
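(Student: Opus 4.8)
The plan is to treat the stress transition as a flat multiplicative connection on the faces and to read off all four assertions as properties of its holonomy; I write $\mu(C)$ for the stress-monodromy of a generic face-cycle $C$.

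First I would record the local mechanism. Along a generic face-path, the transition across an edge $e_i$ — which by trivalence is incident to exactly three faces $f_{i-1,i}$, $f_{i,i+1}$, $\hat f_i$ — is multiplication by a well-defined scalar $r_i\in\R\setminus\{0\}$: the three normals at $e_i$ lie in the $2$-plane orthogonal to $e_i$ and, by genericity, are pairwise independent, so Equation~\eqref{self-stress-d-framework} at $e_i$ with $s(f_{i-1,i})$ prescribed has a unique solution for $s(f_{i,i+1})$ and $s(\hat f_i)$, linear in $s(f_{i-1,i})$. Hence the transition along any face-path is multiplication by the product of the $r_i$, and $\mu(C)$ is exactly this product taken around $C$. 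Items (2) and (3) follow at once: relabelling the first face cyclically permutes the factors $r_i$, which does not change their product in the abelian group $\R\setminus\{0\}$; and reversing the direction of $C$ replaces each $r_i$ by $r_i^{-1}$ (the equilibrium relation at $e_i$ is symmetric, so the backward transition inverts the forward one) and reverses their order, turning the product into its reciprocal.

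For item (1), if $\mu(C)=1$ I would set $s(f_a)=1$ and propagate around $C$; triviality of the monodromy makes the assignment close up, yielding $s\colon F\cup\hat F\to\R$ that satisfies Equation~\eqref{self-stress-d-framework} at every edge (the equation defining the transition also balances $\hat f_i$) and is nowhere zero since every $r_i\ne 0$, so $C$ is self-stressable. Conversely a non-zero self-stress $s$ cannot vanish at any face: if it vanished at one of the three faces at some $e_i$, pairwise independence of the normals would force it to vanish at the other two, and connectedness of $C$ would spread this everywhere; thus $s(f_a)\ne 0$, $s$ realizes the transition, which must then fix $s(f_a)$, so $\mu(C)=1$.

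Item (4) is multiplicativity of holonomy. The scalars $r_i$ assemble into a discrete multiplicative $1$-form $\omega$ on the graph whose vertices are the faces and whose oriented edges record, for each $e\in E$, the ordered pairs of faces incident to $e$ (with $\omega$ inverted on reversal); then $\mu(C)$ is the product of $\omega$ along $C$, so it is additive in the underlying $1$-chain and defines a homomorphism from $1$-cycles to $\R\setminus\{0\}$, under which a homological sum of face-cycles maps to the product of their monodromies. Concretely, when $C_1,C_2$ share a sub-path $B$ and $C_1+C_2$ is formed by cancelling $B$, the arc $B$ appears with opposite orientations in $C_1$ and $C_2$, so its contributions are reciprocal and drop out of $\mu(C_1)\mu(C_2)$, leaving exactly the product of $\omega$ over $C_1+C_2$. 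The step I expect to demand the most care is (4): one must fix the cellular/homological framework compatible with the ``discrete $1$-form'' language used later, and check that $\omega$ is closed — equivalently that the monodromy around the length-$3$ cycle on the three faces of a single $e\in E$ is trivial, which is just $\tfrac{s_b}{s_a}\cdot\tfrac{s_c}{s_b}\cdot\tfrac{s_a}{s_c}=1$ — so that $\mu$ descends to $1$-homology and ``$C_1+C_2$'' is even meaningful.
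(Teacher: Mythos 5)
Your argument is correct. The paper offers no proof of this lemma (it is prefaced by ``It is clear that''), and your reading of the transition across each trivalent edge as multiplication by the nonzero scalar determined by the unique linear relation among the three pairwise non-parallel normals at that edge is precisely the mechanism the paper itself formalizes later as the discrete multiplicative $1$-form of Equation~\eqref{eq:multiform}; items (i)--(iv) then follow, as you say, as properties of the holonomy of this $1$-form on the dual graph, and your propagation/closing-up argument for (i) matches Proposition~\ref{path-self-stress-existence}. One minor remark: item (iv) is meant at the level of $1$-chains in the dual graph (cancellation of oppositely traversed shared arcs), which your cancellation argument already establishes; the additional check that the length-$3$ monodromy around a single edge is trivial is correct (the transition factors at one edge are ratios of the coefficients $\lambda$ in Equation~\eqref{eq:normalssum}, so they multiply to $1$), but descent to homology classes is not needed for the statement as it is used later in the paper.
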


\subsection{Face-path equivalence}
Let us now introduce the notion of equivalent face-path $d$-frameworks.

\begin{definition}\quad
Two face-path (face-cycle) $d$-frameworks $\Gamma_1$ and $\Gamma_2$
starting from the plane $f_a$ and ending at the plane $f_z$ are \emph{equivalent}
if there exists a sequence of elementary surgery-flips taking $\Gamma_1$ to $\Gamma_2$.
\end{definition}

It turns out that equivalent face-path $d$-frameworks  have equivalent stress-transitions.

\begin{proposition}\label{Stress-translation-equivalent-paths}
The stress-transition of two equivalent face-path $d$-frameworks coincide.
\end{proposition}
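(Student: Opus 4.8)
The plan is to reduce the claim to the behavior of stress-transition under each of the three types of elementary surgery-flip, since by definition an equivalence between face-path $d$-frameworks is a finite composition of such flips, and ``coincidence of stress-transitions'' is transitive. So it suffices to check that a single admissible $\hf_i$-surgery (and its inverse), the removal/addition of a consecutive duplicate, and the removal/addition of a simple loop of length $2$ each preserve the stress-transition map from the first face $f_a$ to the last face $f_z$.

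For the $\hf_i$-surgery the key input is already available: in the proof of the preceding proposition one shows that adding the auxiliary length-$3$ face-cycle $C_i$ (self-stressable by Proposition~\ref{three-cycle}, since the three relevant $d$-planes meet in a $(d-2)$-plane) with the stress $s_i$ chosen to cancel the stress at the new edge $f_{i-1,i}\cap f_{i,i+1}$ produces an isomorphism between self-stresses of $C$ and of $\hf_i(C)$; moreover that construction leaves the stresses at $f_{i-1,i}$ and at $f_{i+1,i+2}$ unchanged. The first thing I would do is observe that the same local gluing makes sense for a face-\emph{path}: given any stress prescribed at $f_a$, propagate it along the path up to $f_{i-1,i}$ using Equation~\eqref{self-stress-d-framework}, then glue in $C_i$ with the matching stress; since the glued stress agrees with the original one at $f_{i-1,i}$ and at $f_{i+1,i+2}$, and since from $f_{i+1,i+2}$ onward the two frameworks are literally identical, the value reached at $f_z$ is the same. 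Hence the stress-transition maps of $C$ and $\hf_i(C)$ agree, and by symmetry the same holds for the inverse surgery.

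For a consecutive duplicate at position $i$ (so $e_i=e_{i+1}$, $f_{i,i+1}=f_{i+1,i+2}$, $\hat f_i=\hat f_{i+1}$), I would simply check that the equilibrium equation~\eqref{self-stress-d-framework} at $e_i$ and at $e_{i+1}$ are, after removal, one and the same linear relation, so that the stress transmitted from $f_{i-1,i}$ to $f_{i+1,i+2}$ is unchanged; the removal deletes no face carrying an independent stress value. The simple loop of length $2$ at position $i$ is handled the same way: the two edges $e_i=e_{i+2}$ impose equations that, once the loop is contracted, combine to the identity transition from $f_{i,i+1}$ to $f_{i+2,i+3}$, so again the transition map is preserved. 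In every case the verification is a direct inspection of Equation~\eqref{self-stress-d-framework}.

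The main obstacle I anticipate is not any single computation but bookkeeping: making precise that ``stress-transition coincides'' is a statement about the composed linear (indeed projective-linear, scale-equivariant) map $s(f_a)\mapsto s(f_z)$, and checking that the glued framework $C\cup C_i$ used in the $\hf_i$ step genuinely restricts, on the surviving faces, to a configuration in which the transition along the modified path equals the transition along the original path. One must be careful that $C_i$ shares with $C$ exactly the faces $f_{i-1,i}$, $f_{i+1,i+2}$, $\hat f_i$ and the edges $e_i,e_{i+1}$, and that the normals $\mathbf n'$ were fixed in Definition~\ref{HF-def} precisely so that the length-$3$ cycle is edge-orientable; tracking the resulting signs so that stresses ``coincide'' rather than ``negate'' at the right faces is the delicate point, but it is exactly the sign bookkeeping already carried out in the proof of the surgery-invariance proposition, now applied to a path instead of a cycle.
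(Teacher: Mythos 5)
Your proposal is correct and takes essentially the same route as the paper: reduce to a single elementary surgery-flip, dispatch the duplicate and length-$2$-loop cases by direct inspection of Equation~\eqref{self-stress-d-framework}, and settle the $\hf_i$-case by the self-stressability of the auxiliary edge-orientable length-$3$ cycle of Definition~\ref{HF-def}, which follows from Proposition~\ref{three-cycle} since its three hat-planes share a $(d-2)$-plane. The only difference is packaging: the paper states the $\hf_i$-step as this $3$-cycle having trivial monodromy (equivalently, equal local transitions from $f_{i-1,i}$ to $f_{i+1,i+2}$ along its two sides), whereas you route it through the gluing of $C_i$ from the preceding proposition, which rests on exactly the same fact.
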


\begin{proof}
It is enough to prove this statement for any elementary surgery-flip.
In case of $\hf$-surgeries we must show that the face-path $d$-frameworks
$$
C=\big((e_1,e_2),(f_{0,1},f_{1,2},f_{2,3}), (\hat f_1, \hat f_2),N\big),
$$
and
$$
\hf_1(C)=\big((e_3),(f_{0,1} ,f_{2,3}), (\hat f_3), N'\big)
$$
have the same stress-transition (see Figure~\ref{mtens-pic03}).
\begin{figure}[htb]
  \includegraphics{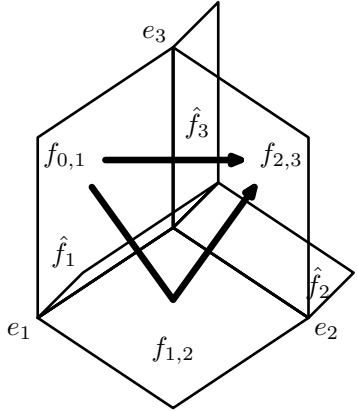}		
	\caption{An elementary flip.}
	\label{mtens-pic03}
\end{figure}
This is equivalent to the fact that the face-cycle $d$-framework
$$
\big((e_1,e_2,e_3),(f_{0,1},f_{1,2},f_{2,3}),
(\hat f_1,\hat f_2,\hat f_3), \mathbf{n}''\big)
$$
(where $\mathbf{n}''$ is as in the cycle of Definition~\ref{HF-def})  has
  a unit stress-transition (i.e.\ trivial monodromy
or, equivalently, is self-stressable).

By the construction of Definition~\ref{HF-def} we get that
 this cycle is edge-orientable, and that the intersection
$$
\hat f_1\cap \hat f_2\cap \hat  f_3\ne \emptyset.
$$
Therefore, by Proposition~\ref{three-cycle} it is self-stressable.

The cases of removing duplicates or loops of length 2 are straightforward.
\end{proof}

\subsection{Face-path $d$-frameworks in $d$-frameworks}\label{stress-s6}

In this subsection we briefly discuss face-path $d$-frameworks and face-cycle
$d$-frameworks that are parts of a larger $d$-framework.


\begin{definition}
  Let $\mathcal{F}$ be a trivalent $d$-framework.
  Then for every (cyclic) sequence of adjacent $d$-planes $\gamma$ we
  naturally associate a face-path $d$-framework (face-cycle $d$-framework)
  $\Gamma(T,\gamma)$ with
\begin{itemize}
  \item $F$ is the sequence of the planes spanned by the corresponding
    $d$-planes of $\gamma$;
  \item $E$ is the sequence of the intersections of the $d$-planes of the
    above $F$;
  \item $\hat F$ is the sequence of planes of $\mathcal{F}$ that are adjacent to the
    planes of $E$ and distinct to the faces already considered in $F$;
  \item $\mathbf{n}$ is the corresponding sequence of normals defined by
    the normals of $\mathcal{F}$.
\end{itemize}
  We say that a face-path $d$-framework (face-cycle $d$-framework)
  $\Gamma(T,\gamma)$ is \emph{induced} by $\gamma$ on $\mathcal{F}$.
\end{definition}

Induced face-path and face-cycle $d$-frameworks have a natural homotopy relation, which is defined as follows.

\begin{definition}\quad

\begin{itemize}
\item
Two induced face-path $d$-frameworks $\Gamma_1$ and $\Gamma_2$ for $\mathcal{F}$
starting from the plane $f_a$ and ending at the plane $f_z$ are \emph{face-homotopic}
if there exists a sequence of elementary surgery-flips taking $\Gamma_1$ to $\Gamma_2$ and
such that after each surgery-flip we have an induced face-path
$d$-framework for $\mathcal{F}$.

\item Two face-cycle $d$-frameworks $\Gamma_1$ and $\Gamma_2$ are \emph{face-homotopic}
if there exists a sequence of elementary surgery-flips taking $\Gamma_1$ to $\Gamma_2$
and such that after each surgery-flip we have an induced face-path
$d$-framework for $G(M)$.
\end{itemize}
\end{definition}

Finally we formulate the following important property of face\dash homotopic
face-path and face-cycle $d$-frameworks.

\begin{proposition}\label{homotopic-face-cycles-stress}
Face-homotopic face-path (face-cycle) $d$-frameworks have the same stress-transition (stress-monodromy).
\end{proposition}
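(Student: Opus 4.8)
The plan is to reduce the statement to Proposition~\ref{Stress-translation-equivalent-paths}, which already establishes that \emph{equivalent} face-path $d$-frameworks have the same stress-transition. First I would unwind the definitions: a face-homotopy between two induced face-path $d$-frameworks $\Gamma_1$ and $\Gamma_2$ (with common starting plane $f_a$ and ending plane $f_z$) is, by definition, a sequence of elementary surgery-flips carrying $\Gamma_1$ to $\Gamma_2$, subject only to the extra requirement that every intermediate framework again be induced by some sequence of adjacent $d$-planes of $\mathcal{F}$. Forgetting this extra requirement, $\Gamma_1$ and $\Gamma_2$ are \emph{equivalent} in the sense of the preceding subsection. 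Hence Proposition~\ref{Stress-translation-equivalent-paths} applies verbatim and yields that $\Gamma_1$ and $\Gamma_2$ have the same stress-transition. The only point worth spelling out in this step is the trivial bookkeeping that the ``induced'' constraint in the definition of face-homotopy is strictly stronger than the relation of equivalence, so nothing is lost in dropping it.

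For the face-cycle case, I would observe that the stress-monodromy along a face-cycle $C$ with $f_a=f_z$ is precisely the scalar by which the stress-transition of $C$, regarded as a face-path from $f_a$ to $f_z$, multiplies an assigned stress; that this scalar is well defined (independently of the chosen starting face) is exactly Lemma~\ref{LemmaMonodr}~(1)--(2). Thus two face-homotopic face-cycles are in particular equivalent face-paths from $f_a$ to itself, so by the first paragraph they have identical stress-transitions, and therefore identical stress-monodromies.

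I do not anticipate any genuine obstacle here. Every elementary surgery-flip occurring in a face-homotopy is, by definition, either an admissible $\hf_i$-surgery or its inverse, or the insertion/removal of a consecutive duplicate or of a simple loop of length $2$, so the hypotheses of Proposition~\ref{Stress-translation-equivalent-paths} are automatically satisfied and no admissibility condition has to be re-verified. The proof is therefore essentially a one-line deduction once the two definitions (``equivalent'' versus ``face-homotopic'') are placed side by side.
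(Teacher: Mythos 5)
Your proof is correct and follows the same route as the paper, which simply notes that the statement follows directly from Proposition~\ref{Stress-translation-equivalent-paths}; you merely spell out the (sound) observation that face-homotopy is a restriction of equivalence, so that proposition applies verbatim.
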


\begin{proof}
The proof directly follows from Proposition~\ref{Stress-translation-equivalent-paths}.
\end{proof}

\section{Geometric Characterizations of self-stressability for Trivalent
$d$-Frameworks}\label{sectionselfstressabilitycriterium}

In this section we discuss the practical question of writing geometric
conditions for cycles. We characterize self-stressable trivalent
$d$-frameworks in terms of exact discrete multiplicative $1$-forms and in
terms of resolvable cycles. Before that we show that a trivalent
$d$-framework is self-stressable if and only if every path and every loop
is self-stressable.

\begin{definition}
A face-cycle $d$-framework is called a \emph{face-loop $d$\dash framework}
if it contains no repeating planes.
\end{definition}

Let us formulate the following general theorem.

\begin{theorem}\label{main-conditions}
  Consider a generic face-connected trivalent  $d$-framework.
  Then the following three statements are equivalent.
  \begin{enumerate}\num
    \item\label{itm:i} $\mathcal{F}$ has a non-zero self-stress $($which is in fact
      non-zero at any $d$-plane$)$.
    \item\label{itm:ii} For every two $d$-planes $f_a,f_z$ in $\mathcal{F}$ the
      stress-transition does not depend on the choice of an induced
      face-path $d$-framework on $\mathcal{F}$.
    \item\label{itm:iii} Every  induced face-loop $d$-framework on $\mathcal{F}$ is
      self-stress\-able.
  \end{enumerate}
\end{theorem}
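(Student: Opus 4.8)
The plan is to prove the cycle of implications $\ref{itm:i}\Rightarrow\ref{itm:ii}\Rightarrow\ref{itm:iii}\Rightarrow\ref{itm:i}$, using the machinery of stress-transition and stress-monodromy developed in Section~\ref{sectionstressability}. The key conceptual point is that a self-stress on $\mathcal{F}$, if it exists, must be non-zero at every $d$-plane (by face-connectedness and Proposition~\ref{path-self-stress-existence} applied to paths inside $\mathcal{F}$), so that the ``stress at a face'' is always a well-defined non-zero number along which transition can be computed; I would state this observation first as a lemma-within-the-proof.

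For $\ref{itm:i}\Rightarrow\ref{itm:ii}$: suppose $s$ is a non-zero self-stress. Then for any induced face-path $d$-framework from $f_a$ to $f_z$, the restriction of $s$ to the planes appearing in the path is a self-stress of that path (the equilibrium equations at the edges $e_i$ of the path are implied by the equilibrium equations of $\mathcal{F}$ at those edges). By Proposition~\ref{path-self-stress-existence} the path has a one-dimensional space of self-stresses, so the stress-transition from $f_a$ to $f_z$ along the path is forced: it sends the value $s(f_a)$ to $s(f_z)$, independently of the path. Hence \ref{itm:ii} holds, with the common ratio equal to $s(f_z)/s(f_a)$.

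For $\ref{itm:ii}\Rightarrow\ref{itm:iii}$: given an induced face-loop $d$-framework $\Gamma$ based at $f_a$, decompose it as a face-path from $f_a$ to some face and back, or more directly observe that \ref{itm:ii} says the transition along the ``forward'' portion agrees with the transition computed along the trivial (length-zero) path, forcing the monodromy around $\Gamma$ to be $1$; by Lemma~\ref{LemmaMonodr}\,(1), trivial monodromy is equivalent to self-stressability of $\Gamma$. For $\ref{itm:iii}\Rightarrow\ref{itm:i}$: fix a base face $f_a$, set $s(f_a)=1$, and for every other $d$-plane $f$ of $\mathcal{F}$ define $s(f)$ to be the image of $1$ under the stress-transition along any induced face-path from $f_a$ to $f$ (such paths exist by face-connectedness). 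This is well-defined: any two such paths concatenate (one reversed) into an induced face-cycle, which can be reduced by elementary surgery-flips to a union of induced face-loops; since each face-loop has trivial monodromy by \ref{itm:iii} and, by Proposition~\ref{homotopic-face-cycles-stress} together with Lemma~\ref{LemmaMonodr}\,(4), monodromy is a homotopy-invariant multiplicative function, the whole cycle has trivial monodromy, so the two transitions agree. One then checks that this $s$ is a genuine self-stress: at each edge $e$ of $\mathcal{F}$, the three incident faces all lie on induced paths through $e$, and the local three-term equilibrium at $e$ is exactly the transition relation that defines two of the three values in terms of the third, so Equation~\eqref{self-stress-d-framework} holds at $e$; $s$ is non-zero everywhere by construction (transition preserves non-vanishing, Proposition~\ref{path-self-stress-existence}).

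The main obstacle is the well-definedness step in $\ref{itm:iii}\Rightarrow\ref{itm:i}$: one must argue that an arbitrary closed induced face-cycle has trivial monodromy knowing only that the \emph{simple} induced face-loops (those with no repeated plane) do. This requires showing that every induced face-cycle is face-homotopic, via elementary surgery-flips staying inside $\mathcal{F}$, to a composition of simple face-loops, and then invoking the multiplicativity of monodromy under homological addition (Lemma~\ref{LemmaMonodr}\,(4)) and its invariance under face-homotopy (Proposition~\ref{homotopic-face-cycles-stress}). Care is also needed that the resulting function $s$ depends only on the $d$-plane $f$ and not on any path data, and that the generic hypothesis guarantees all the intermediate transitions and surgeries are admissible. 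The remaining implications are essentially formal consequences of Proposition~\ref{path-self-stress-existence} and Lemma~\ref{LemmaMonodr}.
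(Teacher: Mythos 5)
Your proposal is correct and takes essentially the same approach as the paper: the implications \ref{itm:i}$\Rightarrow$\ref{itm:ii}$\Rightarrow$\ref{itm:iii} are the same formal consequences of Proposition~\ref{path-self-stress-existence} and Lemma~\ref{LemmaMonodr}, and your well-definedness step for \ref{itm:iii}$\Rightarrow$\ref{itm:i} (concatenate two paths into a cycle, decompose it into simple loops, and use multiplicativity of the monodromy) is exactly the paper's ``split $C$ into consecutive loops'' argument. The only difference is organizational: the paper proves \ref{itm:iii}$\Rightarrow$\ref{itm:ii} by contradiction and then gets \ref{itm:i} from \ref{itm:ii}, whereas you merge these into a direct construction of the stress, which changes nothing essential.
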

\begin{proof}
  \ref{itm:ii} $\Leftrightarrow$ \ref{itm:i}:
  Item~\ref{itm:i} tautologically implies Item~\ref{itm:ii}.
  Let us show that Item~\ref{itm:ii} implies Item~\ref{itm:i}. Fix a
  starting face $f_a$ and put a stress $s(f_a)=1$ on it.  Expand the
  stress to all the other faces.  By  assumption this can be done
  uniquely.  Therefore, this stress is a self-stress.  (Indeed, if we do
  not have the equilibrium condition at some plane $e$, then at the planes
  incident to $e$  we have more than one possible stress-transition.)

  \ref{itm:ii} $\Rightarrow$ \ref{itm:iii}:
  Indeed any simple face-cycle $d$-framework on $\mathcal{F}$ can be considered as
  one long face-path $d$-framework with $f_a=f_z$. By condition of
  Items~\ref{itm:ii} the stress-transition equals 1, and therefore this
  face-cycle $d$-framework is self-stressable.  The last is equivalent to
  Item~\ref{itm:iii}.

  \ref{itm:iii} $\Rightarrow$ \ref{itm:ii}:
  Let us use reductio ad absurdum.  Suppose Item~\ref{itm:iii} is true
  while Item~\ref{itm:ii} is false. If Item~\ref{itm:ii} is false then
  there exist at least two face-path $d$-frameworks with the same $f_a$
  and $f_z$ where the stress-transitions fail to be the same.  Now the
  union $C$ of the first face-path $d$-framework and the inverse second is
  a induced face-cycle $d$-framework on $G(M)$ with non-unit
  stress-transition.  Let us split $C$ into consecutive loops $C_1,\ldots,
  C_k$.  At least one of them should have a non-unit translation.
  Therefore, Item~\ref{itm:iii} is false as well, a contradiction.

  For completeness of the last proof we should add the following two
  observations regarding cycles of small length.
  Firstly, the stress\dash transition remains constant at planes that
  repeat successively two or more times.  This happens due to genericity
  of $\mathcal{F}$: there are zero contributions from $\hat f_i$ in case if
  $f_{i-1,i}=f_{i,i+1}$.
  And secondly, if it happens that $f_{i-1,i}=f_{i+1,i+2}$ then we
  immediately have $e_i=e_i+1$ and therefore again the stress-transitions
  at $f_{i-1,i}$ and at $f_{i+1,i+2}$ coincide.
\end{proof}

\subsection{Ratio condition for self-stressable multidimensional trivalent
frameworks}

In this section we characterize generic trivalent $d$-frameworks $\mathcal{F}$ with
respect to their self-stressability in terms of specific products of
ratios. More precisely, we equip each $d$-framework with a so called
discrete multiplicative $1$-form which turns out to be exact if and only
if the $d$-framework is self-stressable. Let us start with the definition
of discrete multiplicative $1$-forms (see, e.g.,~\cite{bobenko+2008}).

\begin{definition}
  A real valued function $q : \vec E(G) \to \R\setminus\{0\}$
  (where $\vec E(G)$ denotes the set of oriented edges of the graph $G$) is
  called a \emph{discrete multiplicative $1$-form}, if $q(-a) = 1/q(a)$
  for every $a \in \vec E(G)$.  It is called \emph{exact} if for every
  cycle $a_1, \ldots, a_k$ of directed edges the values of the $1$-form
  multiply to $1$, i.e.,
  $$
  q(a_1) \cdot \ldots \cdot q(a_k) = 1.
  $$
\end{definition}

Now, as a next step we will equip any general trivalent $d$-framework
with a discrete multiplicative $1$-form $q$. However, we will not define
$q$ directly on the $d$-framework but on what we call its dual graph.

\begin{definition}
  The vertices of the \emph{dual graph of a $d$-framework} are the
  $d$-dimensional planes and the edges ``connect $d$-dimensional planes''
  that are sharing a $(d{-}1)$-dimensional plane.
\end{definition}

Consequently, the edges of the dual graph of $\mathcal{F}$ can be identified with
triples of successive $(d{-}1)$-planes $a_i := (e_{i - 1}, e_i, e_{i +
1})$ (where $e_i \in E$).
So let us now equip the dual graph of $\mathcal{F}$ with a discrete multiplicative
$1$-form. For an illustration see Figure~\ref{fig:facepath}.
\begin{figure}[htb]

  \begin{overpic}[width=.49\textwidth]{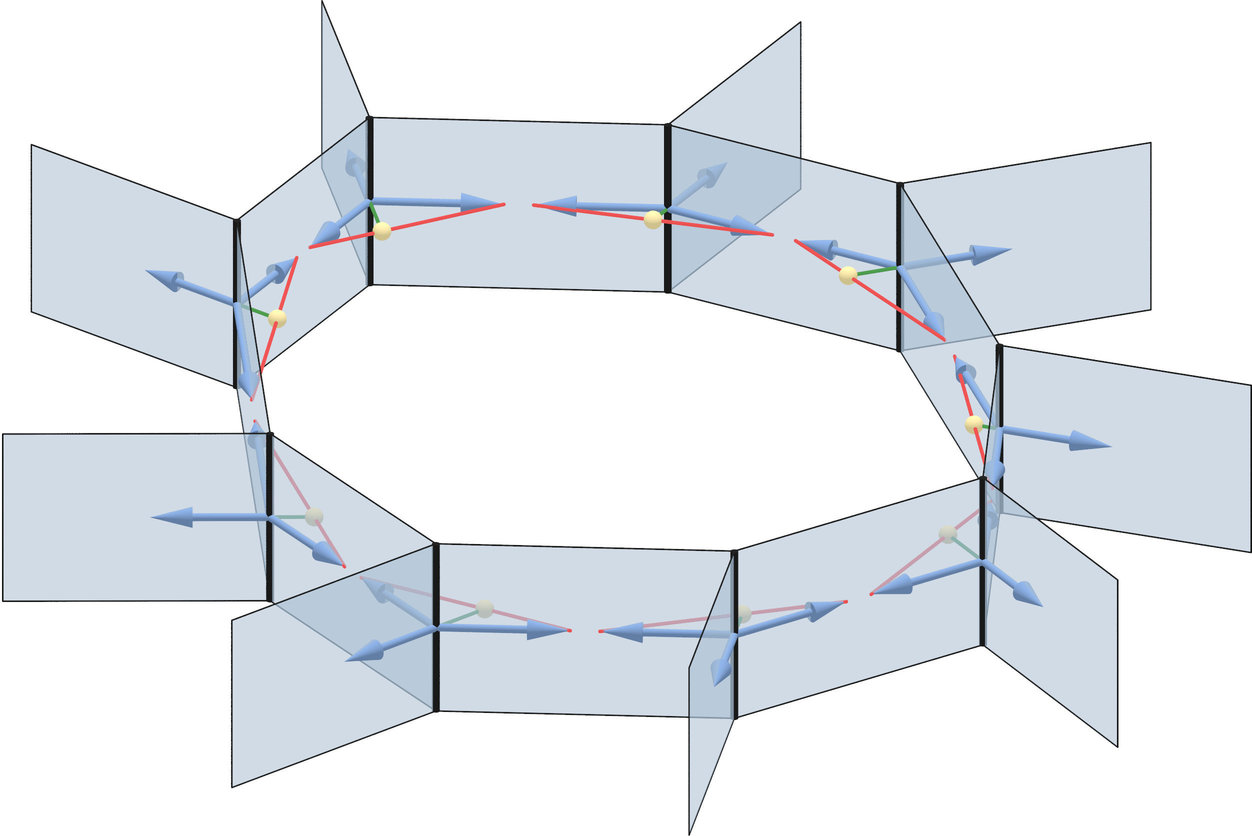}
  \end{overpic}
  \hfill
  \fboxsep0pt\fbox{%
  \begin{overpic}[width=.49\textwidth]{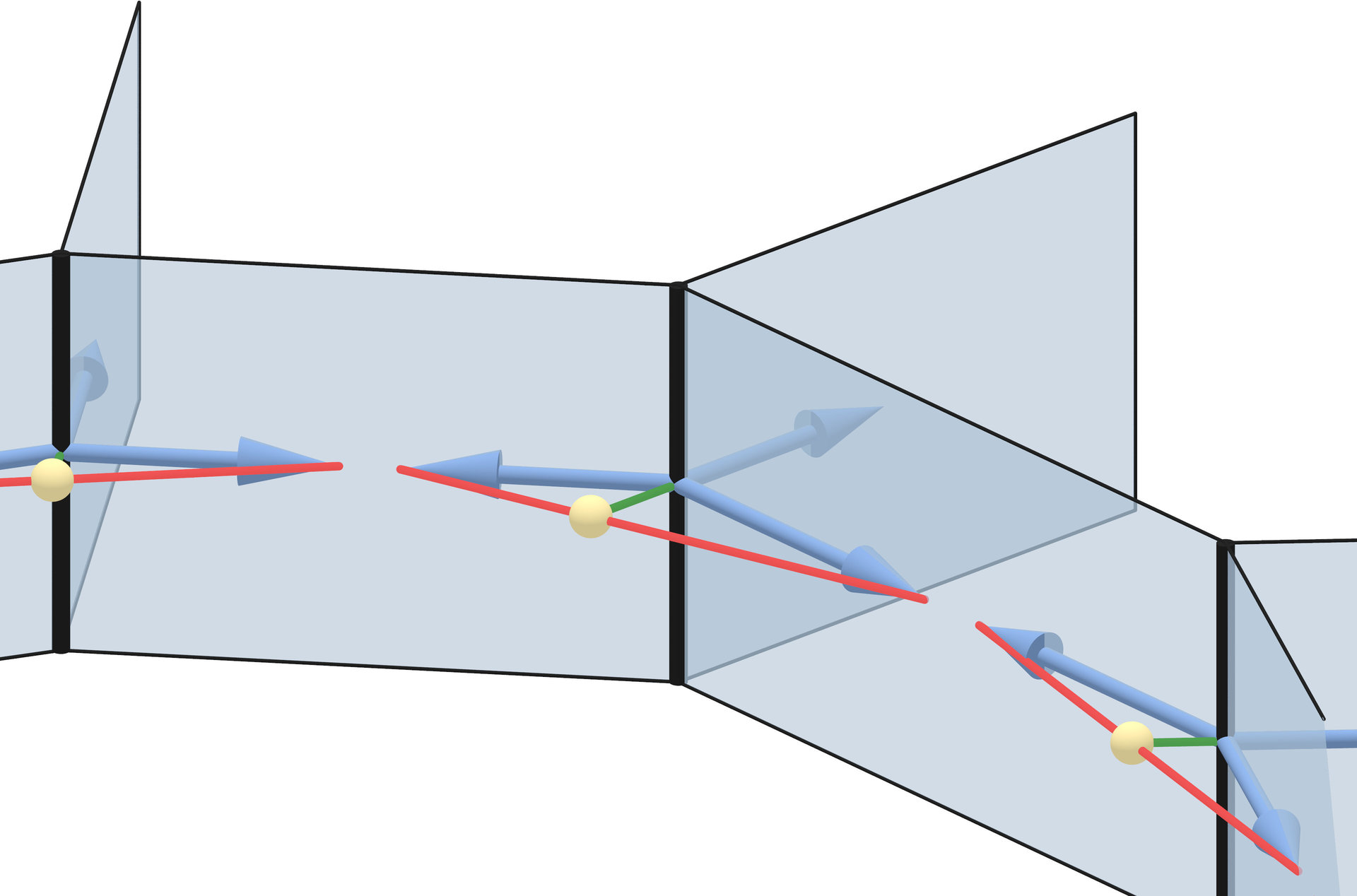}
    \put(45,10){$e_i$}
    \put(2,13){$e_{i - 1}$}
    \put(88,28){$e_{i + 1}$}
    \put(20,19){\rotatebox{-1}{$f_{i - 1, i}$}}
    \put(62,12){\rotatebox{-24}{$f_{i, i + 1}$}}
    \put(76,46){$\hat f_i$}
    \put(26,34){\rotatebox{-1}{\tiny$n(e_i,\! f_{i\! -\! 1, i})$}}
    \put(52,32){\rotatebox{-24}{\tiny$n(e_i,\! f_{i, i\!\, +\!\, 1})$}}
   \put(40,22){$\mathbf{r_i}$}
  \end{overpic}%
  }
  \caption{Illustration of some edges of the dual graph in a face loop of
  a $d$-framework.
  The values of the discrete multiplicative $1$-form $q(a_i)$ (cf.\
  Eqn.~\eqref{eq:multiform}) is the affine ratio
  $q(a_i) =
  \big(\mathbf{n}(e_i, f_{i - 1, i}) - \mathbf{r_i}\big)
  :
  \big(\mathbf{r_i} - \mathbf{n}(e_i, f_{i, i + 1})\big)$.}
  \label{fig:facepath}
\end{figure}

As our $d$-framework is trivalent the $(d{-}1)$-plane $e_i$ is contained
in three $d$-planes $f_{i - 1, i}, f_{i, i + 1}, \hat f_i$ and therefore
the corresponding normals
$\mathbf{n}(e_i, f_{i - 1, i})$,
$\mathbf{n}(e_i, f_{i, i + 1})$,
$\mathbf{n}(e_i, \hat f_i)$ are linearly dependent, i.e., lie in a
$2$-plane. This together with the fact that the $d$-framework is generic
implies that there are $\lambda_{i - 1, i}, \lambda_{i, i + 1}, \hat
\lambda \in \R\setminus\{0\}$ such that
\begin{equation}
  \label{eq:normalssum}
  \lambda_{i - 1, i} \mathbf{n}(e_i, f_{i - 1, i}) +
  \lambda_{i, i + 1} \mathbf{n}(e_i, f_{i, i + 1}) +
  \hat \lambda \mathbf{n}(e_i, \hat f) = 0.
\end{equation}

Now we are in position to define our discrete multiplicative $1$-form on
the oriented dual graph by
\begin{equation}
  \label{eq:multiform}
  q(a_i) = q(e_{i - 1}, e_i, e_{i + 1})
  :=
  \frac{\lambda_{i, i + 1}}{\lambda_{i - 1, i}},
\end{equation}
since clearly $q(-a_i) = 1/q(a_i)$ is fulfilled.
The geometric meaning of $q(a_i)$ is the following (cf.\
Figure~\ref{fig:facepath} right). Denote by $\mathbf{r_i}$ the
intersection point of the straight line with direction $\mathbf{n}(e_i,
\hat f)$ and intersect it with the line through $\mathbf{n}(e_i,
f_{i - 1, i})$ and $\mathbf{n}(e_i, f_{i, i + 1})$. A simple computation
shows
$$
\mathbf{r_i}
=
\frac{\lambda_{i - 1, i}}{\lambda_{i - 1, i} + \lambda_{i, i + 1}}
\mathbf{n}(e_i, f_{i - 1, i})
+
\frac{\lambda_{i, i + 1}}{\lambda_{i - 1, i} + \lambda_{i, i + 1}}
\mathbf{n}(e_i, f_{i, i + 1}).
$$
Thus $q(a_i)$ is the affine ratio of the three points
$\mathbf{n}(e_i, f_{i - 1, i}), \mathbf{r_i}, \mathbf{n}(e_i, f_{i, i + 1})$, i.e.,
$q(a_i) =
\big(\mathbf{n}(e_i, f_{i - 1, i}) - \mathbf{r_i}\big)
:
\big(\mathbf{r_i} - \mathbf{n}(e_i, f_{i, i + 1})\big)$.

With that definition of a discrete multiplicative $1$-form we can now
characterize self-stressable $d$-frameworks.

\begin{theorem}
  A generic trivalent $d$-framework is self-stressable if and only if
  the discrete multiplicative $1$-form defined by~\eqref{eq:multiform} is
  exact.
\end{theorem}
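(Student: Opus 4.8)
The plan is to recognize the discrete multiplicative $1$-form $q$ as nothing other than the \emph{stress-transition}: crossing an edge $e_i$ multiplies a stress exactly by $q(a_i)$. Once this identification is available, the theorem follows by combining it with the equivalence \ref{itm:i}$\Leftrightarrow$\ref{itm:ii} of Theorem~\ref{main-conditions}. I would work under the face-connectedness hypothesis of Theorem~\ref{main-conditions}; for a disconnected framework the statement is to be read component by component.

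The heart of the argument is a local computation at a single $(d{-}1)$-plane. Fix $e_i\in E$, contained in the three $d$-planes $f_{i-1,i}$, $f_{i,i+1}$, $\hat f_i$, and let $a_i=(e_{i-1},e_i,e_{i+1})$ be the associated directed edge of the dual graph, so that $f_{i-1,i}$ and $f_{i,i+1}$ are the two $d$-planes it joins and $\hat f_i$ is the remaining $d$-plane through $e_i$. By genericity the three normals at $e_i$ are pairwise linearly independent yet span only a $2$-plane, so the linear relations among them form a $1$-dimensional space. The coefficient triple $(\lambda_{i-1,i},\lambda_{i,i+1},\hat\lambda)$ of~\eqref{eq:normalssum} spans this line; and for any stress $s$ obeying the equilibrium equation~\eqref{self-stress-d-framework} at $e_i$, so does $(s(f_{i-1,i}),s(f_{i,i+1}),s(\hat f_i))$. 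Hence these triples are proportional, whence
$$
\frac{s(f_{i,i+1})}{s(f_{i-1,i})}=\frac{\lambda_{i,i+1}}{\lambda_{i-1,i}}=q(a_i)
$$
whenever $s(f_{i-1,i})\neq 0$; the orientation of $\mathbf{n}(e_i,\hat f_i)$ only flips the sign of $\hat\lambda$ and is irrelevant, in line with Proposition~\ref{Changes}\,\ref{or:iii}. I expect this step to be short but to be the conceptual crux.

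From the local step, the stress-transition along an induced face-path $d$-framework $\Gamma(T,\gamma)$ multiplies the stress on the starting plane by the product of the values $q(a_i)$ along the corresponding walk $\gamma$ in the dual graph. Since induced face-path $d$-frameworks between two given $d$-planes $f_a$ and $f_z$ correspond exactly to the walks in the dual graph from $f_a$ to $f_z$, the stress-transition between $f_a$ and $f_z$ is independent of the chosen induced face-path $d$-framework if and only if the product of $q$ along a dual walk from $f_a$ to $f_z$ depends only on $f_a$ and $f_z$. By the standard elementary fact (two walks with the same endpoints differ by a cycle, and $q(-a)=1/q(a)$) the latter holds for all $f_a,f_z$ precisely when $q$ is exact. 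Thus Item~\ref{itm:ii} of Theorem~\ref{main-conditions} is equivalent to exactness of $q$, and by that theorem so is Item~\ref{itm:i}, i.e.\ self-stressability of $\mathcal{F}$. Equivalently, in the cyclic picture: for an induced face-cycle $d$-framework the product $q(a_1)\cdots q(a_k)$ is its stress-monodromy (Lemma~\ref{LemmaMonodr}), so exactness says every such monodromy is trivial, which by Lemma~\ref{LemmaMonodr}\,(1) and Theorem~\ref{main-conditions} again characterizes self-stressability.

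There is no serious obstacle beyond the local step: the only points needing a little care are the bookkeeping that attaches $\lambda_{i-1,i}$ and $\lambda_{i,i+1}$ to the correct normals once a direction on the dual edge is fixed, and the (routine, already-used) translation between induced face-paths/face-cycles and walks/cycles of the dual graph. If desired, the ``if'' direction can also be proved self-containedly, bypassing Theorem~\ref{main-conditions}: set $s(f_a)=1$ and transport $s$ along walks in the dual graph — well defined by exactness of $q$ — and observe that by the local step the resulting function automatically satisfies every equilibrium equation~\eqref{self-stress-d-framework} and is nowhere zero.
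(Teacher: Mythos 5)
Your proposal is correct and follows essentially the paper's own route: the same local comparison of the equilibrium equation \eqref{self-stress-d-framework} with \eqref{eq:normalssum}, identifying $q(a_i)$ with the stress ratio $s(f_{i,i+1})/s(f_{i-1,i})$ (i.e.\ the stress-transition factor), combined with Theorem~\ref{main-conditions}. The only cosmetic difference is that you run both directions through path-independence of the stress-transition (item~\ref{itm:ii}) and the monodromy picture, whereas the paper telescopes the product of $q$ along loops directly and, for the converse, propagates a stress around each loop and closes it up via item~\ref{itm:iii}; the substance is the same.
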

\begin{proof}
  Suppose the $d$-framework has a self-stress $s$. Therefore
  Equation~\eqref{self-stress-d-framework} implies
  $$
  s(f_{i - 1, i}) \mathbf{n}(e_i, f_{i - 1, i}) +
  s(f_{i, i + 1}) \mathbf{n}(e_i, f_{i, i + 1}) +
  s(\hat f) \mathbf{n}(e_i, \hat f) = 0.
  $$
  Comparison with Equation~\eqref{eq:normalssum} implies that the
  coefficients in both equations are just a multiple of each other, i.e.,
  $$
  (s(f_{i - 1, i}), s(f_{i, i + 1}), s(\hat f))
  =
  \mu (\lambda_{i - 1, i}, \lambda_{i, i + 1}, \hat \lambda).
  $$
  Consequently, the value of the discrete multiplicative $1$-form is the
  ratio of neighboring stresses:
  $$
  q(a_i) = \frac{\lambda_{i, i + 1}}{\lambda_{i - 1, i}}
  =
  \frac{s(f_{i, i + 1})}{s(f_{i - 1, i})}.
  $$
  Therefore it is easy to see that the product of values $q(a_i)$ along
  any closed loop in the dual graph multiplies to $1$:
  $$
  q(a_1) \cdot \ldots \cdot q(a_k)
  =
  \frac{s(f_{1, 2})}{s(f_{k, 1})}
  \frac{s(f_{2, 3})}{s(f_{1, 2})}
  \cdot
  \ldots
  \cdot
  \frac{s(f_{k - 1, k})}{s(f_{k - 2, k - 1})}
  \frac{s(f_{k, 1})}{s(f_{k - 1, k})}
  = 1.
  $$

  Now conversely, let us assume that the discrete multiplicative $1$-form
  $q$ is exact.
  By Theorem~\ref{main-conditions} it is sufficient to show that each loop
  of the form $e_1, \ldots, e_k$ of $(d - 1)$-planes is self-stressable.
  Choose an arbitrary stress $s(f_{1, 2}) \in \R \setminus \{0\}$ for the
  first $d$-plane. Equation~\eqref{self-stress-d-framework} and the
  $d$-framework being generic
  then uniquely determines the stresses of the two other $d$-planes
  incident to $e_1$, that is, $s(f_{k, 1})$ and $s(\hat f_1)$. Continuing,
  determining stresses this way defines all stresses along the loop
  including the last stress that we now denote by $\tilde s(f_{k, 1})$
  because it was defined before. However, the exactness of $q$ gives
  \begin{eqnarray*}
    1
    &
        =&
    q(a_1) \cdot \ldots \cdot q(a_k)
   =
    \frac{\lambda_{1, 2}}{\lambda_{k, 1}}
    \frac{\lambda_{2, 3}}{\lambda_{1, 2}}
    \cdot
    \ldots
    \cdot
    \frac{\lambda_{k - 1, k}}{\lambda_{k - 2, k - 1}}
    \frac{\lambda_{k, 1}}{\lambda_{k - 1, k}}
   \\ &=&
    \frac{s(f_{1, 2})}{s(f_{k, 1})}
    \frac{s(f_{2, 3})}{s(f_{1, 2})}
    \cdot
    \ldots
    \cdot
    \frac{s(f_{k - 1, k})}{s(f_{k - 2, k - 1})}
    \frac{\tilde s(f_{k, 1})}{s(f_{k - 1, k})}
   \\& =&
    \frac{\tilde s(f_{k, 1})}{s(f_{k, 1})},
  \end{eqnarray*}
so  $\tilde s(f_{k, 1}) = s(f_{k, 1})$. Consequently, we can consistently
  define a non-zero stress.
\end{proof}

\subsection{Cayley algebra conditions}

Let us start with the following important definition.

\begin{definition}\label{GenericLoops}
  \quad
  \begin{itemize}
\item A face-cycle $d$-framework of length 3 is \emph{in general position} if all 6 planes in the sequences $F$ and $\hat F$
are pairwise distinct.

\item A face-cycle $d$-framework is \emph{resolvable}
if there exists a sequence of $\hf$-surgeries transforming it to a
face-cycle $d$-framework of length 3 in general position.

\item A $d$-framework is \emph{resolvable} if all its simple induced
face-cycle $d$-frameworks are resolvable.
\end{itemize}
\end{definition}


We continue with the following definition.

\begin{definition}[\bf Cayley algebra condition for a
single face-cycle resolvable $d$-framework]\label{cycle-condition}{}
Consider a resolvable face-cycle $d$\dash framework
$$
C=(E,F,\hat F, \mathbf{n})
$$
and any sequence of $\hf$-surgeries transforming it to a
face-cycle $d$-framework
$$
C' =(E', F',\hat F', \mathbf{n}')
$$
of length 3 in general position.

\begin{itemize}
\item Let us write all elements of $F'$ and $\hat F'$ in $C'$ as Cayley algebra expressions
of the elements of $F$ and $\hat F$ in $C$. The resulting expressions are compositions of
expressions of Remark~\ref{HI-condition}.

\item Finally, we use the dimension condition of Proposition~\ref{three-cycle} for $C'$
to determine if $C'$ is stressable or not.
\end{itemize}
The composition of the above two items gives an existence condition for nonzero self-stresses on $C$.
We call this condition a {\it Cayley algebra geometric condition} for $C$
to admit a non-zero self-stress.
\end{definition}

Note that one can write distinct Cayley algebra geometric condition for $C$ using different sequences of
$\hf$-surgeries transforming $C$ to a face-cycle $d$-framework of length 3 in general position.

As we have already mentioned in the above definition
the Cayley algebra geometric conditions detect self-stressability on cycles.
So we can write self-stressability conditions for a general trivalent resolvable $d$-framework.
Namely we have the following theorem.

\begin{theorem}\label{ccc1}
Let $\mathcal{F}$ be a trivalent resolvable $d$-framework and let further $C_1,\ldots, C_n$ be all pairwise non-face-homotopic face-loop $d$-frame\-works on $\mathcal{F}$.
Then $\mathcal{F}$ has a self-stress if and only if it fulfills Cayley algebra geometric conditions for
cycles $C_1,\ldots, C_n$ as in Definition~\ref{cycle-condition}.
\end{theorem}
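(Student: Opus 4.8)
The plan is to assemble Theorem~\ref{ccc1} from the machinery already developed in the excerpt, treating it essentially as a corollary of Theorem~\ref{main-conditions}, Proposition~\ref{homotopic-face-cycles-stress}, and the fact (established right before the statement) that the Cayley algebra geometric conditions of Definition~\ref{cycle-condition} detect self-stressability of a single resolvable face-cycle. First I would invoke Theorem~\ref{main-conditions}, specifically the equivalence \ref{itm:i}~$\Leftrightarrow$~\ref{itm:iii}: a generic face-connected trivalent $d$-framework $\mathcal{F}$ has a non-zero self-stress if and only if every induced face-loop $d$-framework on $\mathcal{F}$ is self-stressable. So it suffices to show that checking self-stressability of \emph{every} induced face-loop is equivalent to checking it for the finitely many representatives $C_1,\ldots,C_n$.

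The second step is the reduction from all face-loops to the representatives $C_1,\ldots,C_n$. By definition, $C_1,\ldots,C_n$ are a complete set of pairwise non-face-homotopic face-loop $d$-frameworks, so every induced face-loop on $\mathcal{F}$ is face-homotopic to some $C_j$. By Proposition~\ref{homotopic-face-cycles-stress}, face-homotopic face-cycle $d$-frameworks have the same stress-monodromy, hence (by Lemma~\ref{LemmaMonodr}(1)) one is self-stressable if and only if the other is. Therefore ``every induced face-loop on $\mathcal{F}$ is self-stressable'' is equivalent to ``each of $C_1,\ldots,C_n$ is self-stressable.'' Combining with Theorem~\ref{main-conditions}, $\mathcal{F}$ has a non-zero self-stress if and only if each $C_j$ is self-stressable.

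The third step is to replace ``$C_j$ is self-stressable'' by ``$\mathcal{F}$ fulfills the Cayley algebra geometric condition for $C_j$''. Since $\mathcal{F}$ is resolvable, each simple induced face-cycle $d$-framework — in particular each face-loop $C_j$ — is resolvable, so there is a sequence of admissible $\hf$-surgeries carrying $C_j$ to a length-$3$ face-cycle $C_j'$ in general position. By the proposition preceding Definition~\ref{cycle-condition} (self-stressability invariance under admissible $\hf$-surgeries), $C_j$ is self-stressable if and only if $C_j'$ is, and by Proposition~\ref{three-cycle} the latter is governed by the dimension condition $\dim(\hat f_1\cap\hat f_2\cap\hat f_3)=d-1$ (orientable case) or $\dim(\hat f_1\cap\hat f_2\cap\hat f_3')=d-1$ (non-edge-orientable case). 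Writing the planes of $C_j'$ as Cayley algebra expressions in the planes of $C_j$ (compositions of the expression in Remark~\ref{HI-condition}) and substituting into this dimension condition gives exactly the Cayley algebra geometric condition of Definition~\ref{cycle-condition}. Hence ``$C_j$ self-stressable'' $\Leftrightarrow$ ``$\mathcal{F}$ fulfills the Cayley algebra geometric condition for $C_j$,'' and chaining the three equivalences yields the theorem. I would close with a remark that the particular sequence of $\hf$-surgeries used for each $C_j$ is immaterial: different choices give different but logically equivalent Cayley conditions, each correctly detecting self-stressability, as already observed after Definition~\ref{cycle-condition}.

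The main obstacle I anticipate is not in any single deduction — each of the three reductions is an invocation of an already-proven statement — but in the bookkeeping of \emph{which} face-loops actually need to be checked: one must be sure that the face-homotopy classes of induced face-loops are in bijection with (or at least exhausted by) $C_1,\ldots,C_n$, and that every induced face-loop is resolvable so that the Cayley condition even makes sense for it. The first is handed to us by the hypothesis that $C_1,\ldots,C_n$ are \emph{all} pairwise non-face-homotopic face-loops, and the second by the hypothesis that $\mathcal{F}$ is resolvable; so the ``hard part'' is really just to state clearly that these two hypotheses are exactly what make the passage from the infinite family of induced face-loops to the finite checkable list legitimate.
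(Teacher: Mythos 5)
Your proposal is correct and follows essentially the same route as the paper's own (much terser) proof: reduce via Theorem~\ref{main-conditions} to self-stressability of all induced face-loops, use face-homotopy invariance of stress-monodromy (Proposition~\ref{homotopic-face-cycles-stress}) to pass to the representatives $C_1,\ldots,C_n$, and then observe that Definition~\ref{cycle-condition} (via the $\hf$-surgery invariance, Proposition~\ref{three-cycle}, and Remark~\ref{HI-condition}) encodes exactly the self-stressability of each $C_j$. Your version merely spells out the steps the paper leaves implicit, so no substantive difference.
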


\begin{proof}
First of all the self-stressability $C_1,\ldots, C_n$ is equivalent to self-stressability of all face-loops on $\mathcal{F}$.
It follows directly from definition of face-homotopic paths.
Hence by Theorem~\ref{main-conditions}
the self-stressability of all $C_1,\ldots, C_n$ is equivalent
to self-stressability of $\mathcal{F}$ itself.

Finally the geometric conditions for face-loop $d$-frame\-works $C_i$ ($i=1,\ldots, n$)
are described in Definition~\ref{cycle-condition}.
\end{proof}

\begin{corollary}\label{corK5} 
Each realization of $K_5$ in $\mathbb{R}^3$  is self-stressable.
Here we mean a realization of a $2$-framework associated with $K_5$, 
in the spirit of Example~\ref{K5stresssEx}. 
Namely, the edges are all the edges of $K_5$, and faces are all the associated triangles.
\end{corollary}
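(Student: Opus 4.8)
The plan is to exhibit an explicit nowhere-zero self-stress built from the affine dependence of the five points. Write $p_1,\dots,p_5\in\R^3$ for the vertices of the realization; we may assume no four of them are coplanar, which is precisely the condition for the associated $2$-framework to be generic. Five points of $\R^3$ are affinely dependent, so there are reals $\mu_1,\dots,\mu_5$, not all zero, with $\sum_k\mu_k p_k=0$ and $\sum_k\mu_k=0$, and this dependence is unique up to scale; the no-four-coplanar hypothesis forces $\mu_k\neq0$ for every $k$. With the inward normals of Example~\ref{K5stresssEx}, I would set, on the ten triangle-faces $T_{ijk}$,
\[
  s(T_{ijk})\;:=\;\mu_i\,\mu_j\,\mu_k\cdot\mathrm{Area}(p_ip_jp_k).
\]
Because each $\mu_k\neq0$ and each area is positive, this $s$ is nonzero at every face, so it remains only to check that it satisfies the equilibrium equation~\eqref{self-stress-d-framework} at every edge of $K_5$.

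To check equilibrium at a fixed edge $e=\{i,j\}$, with opposite vertices $\{l,m,n\}$, I would project orthogonally onto the $2$-plane perpendicular to the line $p_ip_j$. There $p_i$ and $p_j$ acquire a common image $\bar p$, and the three inward normals $\mathbf n(e,T_{ijk})$ become the unit vectors from $\bar p$ toward the images $\bar p_l,\bar p_m,\bar p_n$ of the opposite vertices. The unique (up to scale) relation among three unit vectors of a plane has coefficients proportional to the products $r_k\cdot\mathrm{SA}(\bar p\,\bar p_{k'}\,\bar p_{k''})$, where $r_k=|\bar p_k-\bar p|$, $\mathrm{SA}$ denotes signed area, and $(k,k',k'')$ runs over cyclic permutations of $(l,m,n)$. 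Now $r_k=2\,\mathrm{Area}(p_ip_jp_k)/|p_j-p_i|$, while $\mathrm{SA}(\bar p\,\bar p_m\,\bar p_n)=\tfrac1{2|p_j-p_i|}\det[\,p_j-p_i,\;p_m-p_i,\;p_n-p_i\,]$, and this determinant is six times the signed volume of the tetrahedron on the four points of $\{1,\dots,5\}$ other than $l$, which by Cramer's rule applied to $\sum_k\mu_k p_k=0,\ \sum_k\mu_k=0$ is proportional to $\mu_l$ with a proportionality constant not depending on $l$. Assembling these, the coefficients of the equilibrium relation at $e$ are proportional to $\bigl(\mathrm{Area}(p_ip_jp_k)\,\mu_k\bigr)_{k\in\{l,m,n\}}$, and the triple $\bigl(s(T_{ijl}),s(T_{ijm}),s(T_{ijn})\bigr)$ equals $\mu_i\mu_j$ times exactly that triple. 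Hence $\sum_{T\ni e}s(T)\,\mathbf n(e,T)=0$ for every $e$, so $s$ is a self-stress and the framework is self-stressable.

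The step I expect to require the most care is the sign bookkeeping in the previous paragraph: one must verify that the signs of the projected areas, of the $3\times3$ determinants, and of the Cramer cofactors combine so that the equilibrium coefficients come out proportional to $\bigl(\mathrm{Area}(p_ip_jp_k)\,\mu_k\bigr)_k$ with a single global sign, rather than with $k$-dependent signs; a couple of explicit small examples confirm this, so the only real risk is a bookkeeping slip. A self-contained alternative uses the machinery of the paper: by Theorem~\ref{main-conditions} it suffices to see that every induced face-loop is self-stressable, and every such loop is resolvable to a length-$3$ face-cycle consisting of the three faces of some $K_4\subset K_5$ meeting at a common vertex $a$; its three $\hat f$-planes are the three triangle-planes through the edge $\{a,e\}$ (with $e$ the remaining vertex) and so share the line $p_ap_e$, while the cycle is edge-orientable (it is the cone with apex $p_a$ over its link triangle), so Proposition~\ref{three-cycle} applies. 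On that route the real work is instead the resolvability claim — checking that every induced face-loop of the line graph $L(K_5)$ can be reduced to such a $K_4$-cycle by admissible $\hf$-surgeries — which the explicit-stress computation avoids.
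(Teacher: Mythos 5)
Your argument is correct, but it takes a genuinely different route from the paper. The paper settles the corollary in one line with its surgery/homotopy machinery: every induced face-loop of $K_5$ is face-homotopic to a face-loop of length three, for which the condition of Proposition~\ref{three-cycle} is automatic (this is essentially your second, alternative route, and the paper leaves the homotopy claim just as terse as you do); it then records a second argument in Remark~\ref{remark-last}, namely that every realization of $K_5$ is a projection of the $2$-skeleton of a $4$-simplex, hence liftable, hence stressable by Theorem~\ref{ThmRecipStressLift}. Your primary argument instead exhibits the stress explicitly, $s(T_{ijk})=\mu_i\mu_j\mu_k\,\mathrm{Area}(p_ip_jp_k)$ with $\mu$ the affine dependence of the five points; this is in effect the Maxwell--Cremona stress coming from the simplex lift, but verified by direct linear algebra, so it bypasses both the surgery calculus and the lifting theorem, shows the stress is nowhere zero, and concretely recovers the ratio $-\sqrt{6}/4$ of Example~\ref{K5stresssEx}, which is a good sanity check. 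The sign bookkeeping you flag does close: after clearing the radii, the equilibrium coefficient at the direction toward $p_l$ is a fixed multiple of $\det[\,p_j-p_i,\;p_m-p_i,\;p_n-p_i\,]$, which by Cramer equals a global constant times $\mathrm{sgn}(l,i,j,m,n)\,\mu_l$, and the three signs $\mathrm{sgn}(l,i,j,m,n)$, $\mathrm{sgn}(m,i,j,n,l)$, $\mathrm{sgn}(n,i,j,l,m)$ coincide because these orderings differ by $3$-cycles of positions, which are even; hence the coefficients are proportional to $\bigl(\mu_k\,\mathrm{Area}(p_ip_jp_k)\bigr)_k$ with a single global sign, as you claimed. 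One small caveat: your reduction to placements with no four coplanar vertices is exactly the paper's standing genericity hypothesis (under which its own homotopy proof also operates), whereas the lifting argument of Remark~\ref{remark-last} also covers degenerate placements; so either state the genericity assumption explicitly or fall back on liftability if you want the statement for literally every realization.
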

Indeed, each face-loop in $K_5$ is face-homotopic to a face-loop of length three. For them, the stressability condition is automatic.
Another argument of stressability of $K_5$ will appear later in Remark~\ref{remark-last} as a consequence of Theorem~\ref{ThmRecipStressLift}.


\section{R-frameworks and their self-stressability. Examples.}\label{sectionstressabilityII}

In this section we work in the settings of Rybnikov's papers \cite{Rybnikov991,Rybnikov99}.

\subsection{R-frameworks}

Informally, R-frameworks are PL (piecewise linear) realizations of
CW-complexes in $\mathbb{R}^{d+1}$. To make this precise,  let us start with a  reminder
about CW complexes.

A finite CW-complex is  constructed inductively by defining its skeleta
(for details see, e.g.,~\cite{Hatcher}). The zero skeleton  $\sk_0$ is  a
finite set of points called \emph{vertices}. Once the $(k{-}1)$-skeleton
$\sk_{k-1}$ is constructed, a finite collection of closed $k$-balls $B_i$
(called \emph{cells}) is attached by some continuous mappings
$\phi_i:\partial B_i \rightarrow \sk_{k-1}$. The images of $B_i$ in the complex are called \emph{closed cells}.

\begin{definition}
  A \emph{regular} CW-complex is a CW-complex such that
  \begin{enumerate}\num
    \item For each $k$-cell $B_i$, the mapping $\phi_i $ is a
      homeomorphism between $\partial B_i$ and a subcomplex of the skeleton
      $\sk_{k-1}$.
    \item The intersection of two closed cells is either empty or some single
      closed cell of this CW-complex.
  \end{enumerate}
\end{definition}

Let $M$ be a regular finite CW-complex with no cells of dimension greater than $d$.

Its faces of dimension $d$ will be called \emph{$d$-faces}.
The $(d-1)$-faces are called the \emph{$d$-edges}.
The $(d-2)$-faces are called the \emph{$d$-vertices}.

\begin{example}
  Let $\overline{M}$  be a regular finite  CW-complex whose
  support\footnote{The support of a CW-complex is the topological space
  represented by the complex. That is, one forgets the combinatorics and
  leaves the topology only.}  $|\overline{M}|$ is a connected
  $(d{+}1)$-manifold, either closed or with boundary.  Let $M$ be its
  $d$-skeleton.
\end{example}

In this setting we also have cells of $\overline{M}$ of dimension $d+1$.
These  will be called \emph{chambers}.

\begin{definition}
  Assume that a mapping $p:\mathrm{Vert}(M)\rightarrow \mathbb{R}^{d+1}$ is such
  that the image of the vertex set of each $k$-cell spans some affine
  $k$-plane.

  We say that $p$ \emph{realizes} $M$ in $\mathbb{R}^{d+1}$; we also say
  that the pair $(M,p)$ is  a \emph{realization} of $M$, or a
  \emph{Rybnikov-framework}, or \emph{R-framework}, for short.
\end{definition}

Notation: given a cell $f\in M$, we abbreviate the image of the vertex
set $p(\mathrm{Vert}(F))$ as $p(f)$
and denote by $\langle p(f) \rangle$ its affine span.

\begin{definition}
  An R-framework is \emph{generic} if, whenever two $d$-faces $f_1$ and $f_2$
  share a $d$-edge, then $\langle p(f_1)\rangle \neq \langle p(f_2)\rangle$.

  From now on we assume that all R-frameworks we deal with are generic.

  As before, we say that an R-framework is   \emph{trivalent} if each $d$-edge is incident to exactly three
  $d$-faces.

   An R-framework is  $(3,4)$\emph{-valent} if each $d$-vertex is incident to exactly
  four $d$-edges (and therefore, to six $d$-faces).
\end{definition}

{\textbf{Informal remark:}  by construction, vertices are mapped to points. One may also imagine  that  the $1$-cells of the complex are mapped to line segments. Therefore $2$-cells are mapped to some closed planar broken lines (polygons). Here self-intersections may occur. As the dimension of  faces grows, the more complicated the associated geometrical object is.


However, there exist nice examples with  convex polyhedra as
images of the faces.  In particular, if a face is a (combinatorial)
simplex, one may think that its image is  a simplex lying in
$\mathbb{R}^d$.}

\begin{example}\label{ExSchlegel}
  \begin{enumerate}\num
    \item \emph{The Schlegel diagram} \cite{Ziegler} of a convex
      $(d+2)$-polytope $K$ is a realisation   of the boundary complex of $K$.

    \item More generally, the projection of  a $(d{+}2)$-dimensional
       polyhedral body $K$ (that is, of a body with piecewise linear
      boundary) to  $\mathbb{R}^{d+1}$ yields a realization  $(M,p)$ where
      $|M|$ is homeomorphic to $\partial K$.
  \end{enumerate}
\end{example}

\subsection{Self-stresses and liftings}
Now we turn to a particular notion of stresses, which
 is borrowed from Rybnikov's paper \cite{Rybnikov991}  and  represents a special case of Definition~\ref{dframeworkdefinition}.
 The principal difference  is that in Rybnikov's setting the choice of normal vectors $\mathbf{n}$
 is dictated by the R-framework.

As Examples~\ref{K5stresssEx} and \ref{ExSchlegel} show, in certain cases, a realization
$(M,p)$ represents all the faces as convex polytopes. Let us call such a
realization \emph{non-crossing}. Otherwise, we say that the realization
is \emph{self-crossing}.

Let us start by introducing stresses for the \textbf{non-crossing version}:
\begin{definition}[cf.\ \cite{Rybnikov99}]
  Assume that a non-crossing realization $(M,p)$ is fixed. Let us assign
  to each pair $(f,e)$ where $e$ is a $d$-edge contained in a $d$-face
  $f$ a unit normal $\mathbf{n}(e,f)$ to $p(e)$ pointing inside the convex
  polytope $p(f)$.

  A real-valued function $s$ defined on the set of $d$-faces is called a
  \emph{self-stress} if at each $d$-edge $e$ of the complex,
  $$
  \sum_{f\supset e}s(f)\mathbf{n}(e,f)=0.\ \ \ (*)
  $$
\end{definition}


To relax the non-crossing condition, let us make some preparation following ~\cite{Rybnikov99}. The informal idea is to triangulate the faces of the complex, since the
representation of a simplex is never self-crossing.

Pick a (combinatorial) orientation of each of the cells of $M$, and
 a (combinatorial) triangulation of $M$ without adding new vertices. So each $d$-face now is replaced by a collection of (combinatorial) simplices. The realization $(M,p)$ yields a realization $(\overline{M},p)$ of the new CW-complex.

\begin{definition}[cf.\ \cite{Rybnikov99}]\label{DefStr}
  Assume that a generic realization $(M,p)$ is fixed. Choose a
  triangulation $(\overline{M},p)$ as is described above.

  For a $d$-edge $e\in \overline{M}$ and a $d$-face $f$ containing $g$,
  choose $\mathbf{n}(e,f)$ to be the unit normal to the oriented cell $f$
  at its simplicial face $g$ whose orientation is induced by the
  orientation of $f$.

  A real-valued function $s$ on the set of $d$-cells of $M$ is called a
  \emph{self-stress} if for every  $d$-edge $e$ of $\overline{M}$, the condition $(*)$ is fulfilled.
\end{definition}

This definition is proven to be independent on the choice of the combinatorial triangulation and also on the choice of the orientations of the faces.

The notion of stressed realizations has the following physical meaning. One imagines that the $d$-faces are realized by planar soap film.
The faces are made of different types of soap, that is, with different physical property. Each of the faces creates a tension, which should
be equilibrium at the $d$-edges. The tension is always orthogonal to the boundary of a face and lies in the affine hull of the face.
A self-intersecting face produces both compression and tension as is depicted in Figure~\ref{tense2019nonconvex01Fig}.
\begin{figure}[htb]
  \centering
  \includegraphics{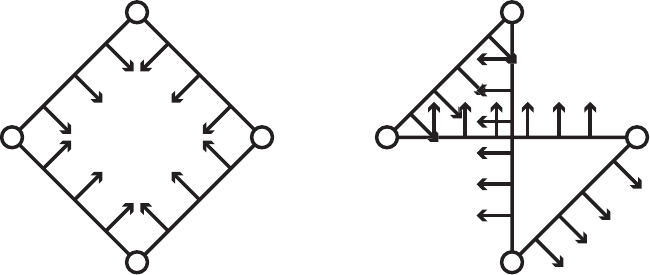}
  \caption{Tensions for a convex quadrilateral (left) and a self intersecting quadrilateral (right).}
  \label{tense2019nonconvex01Fig}
\end{figure}

We say that a R-framework $(M,p)$ is \emph{self-stressable} whenever there exists a non-zero stress.

\begin{proposition}The following two statements hold.
  \begin{enumerate}\num
    \item Each R-framework yields a $d$-framework $(E,I,F,\mathbf{n})$ which
      agrees with the Definition~\ref{dframeworkdefinition}. The
      incidences are dictated by combinatorics of $M$. Its
      self-stressability agrees with the Definition~\ref{tensegritydefinition}
      \qed
    \item For $d=1$, an R-framework  is a planar realization of
      some graph. Its self-stressability agrees with the classical notion
      of self-stresses of graphs in the plane.
      \qed
  \end{enumerate}
\end{proposition}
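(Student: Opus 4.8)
The plan is to unwind the definitions: both items are purely formal, which is why the authors record them with $\qed$ and give no separate argument. Nonetheless here is how I would carry out the verification.

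For item~(i) I would first produce the candidate $d$-framework from a generic R-framework $(M,p)$: let $E$ be the family of affine spans $\langle p(e)\rangle$ indexed by the $d$-edges $e$ of $M$, let $F$ be the family of spans $\langle p(f)\rangle$ indexed by the $d$-faces $f$, and let $I$ consist of the pairs $(\langle p(e)\rangle,\langle p(f)\rangle)$ arising from combinatorial incidences $e\subset f$ in $M$; as in Example~\ref{OctahedronEx} these families are treated as indexed collections, so coincidences among spans cause no trouble. I would then check, one line each, the requirements of Definition~\ref{dframeworkdefinition}: the realization hypothesis that $p$ maps the vertices of a $k$-cell onto a spanning set of a $k$-plane gives $\dim\langle p(e)\rangle = d-1$ and $\dim\langle p(f)\rangle = d$; the inclusion $\mathrm{Vert}(e)\subset\mathrm{Vert}(f)$ gives $\langle p(e)\rangle\subset\langle p(f)\rangle$, so $\langle p(e)\rangle$ is a hyperplane in $\langle p(f)\rangle$ and $I$ has the required form; and the vector $\mathbf{n}(e,f)$ supplied by Definition~\ref{DefStr} is, by construction, a unit vector lying in $\langle p(f)\rangle$ and orthogonal to $\langle p(e)\rangle$ (the auxiliary triangulation and orientations only fix its sign). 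I would also observe that ``generic'' and ``trivalent'' translate verbatim between the two settings: distinctness of the spans of two $d$-faces sharing a $d$-edge is exactly the genericity of Definition~\ref{dframeworkdefinition}, and each $d$-edge meeting precisely three $d$-faces is each $e\in E$ lying in precisely three elements of $F$. Finally, under this dictionary the equilibrium equation $(*)$ at a $d$-edge is literally Equation~\eqref{self-stress-d-framework} at the corresponding edge plane, so the two notions of self-stress agree and self-stressability matches Definition~\ref{tensegritydefinition}.

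For item~(ii) I would specialize to $d=1$, $D=2$. A regular CW-complex of dimension at most one is a graph $G$, and a realization sends vertices to points of $\R^2$ and $1$-cells to straight segments, so an R-framework is precisely a straight-line drawing of $G$; here the $d$-faces are the edges of $G$ and the $d$-edges are its vertices. For an edge $f=vw$, orienting it and taking the induced unit normal of Definition~\ref{DefStr} at the $0$-face $v$ yields $\mathbf{n}(v,f)=\pm(p(v)-p(w))/\|p(v)-p(w)\|$ with a single global sign and independently of the orientation chosen. Hence $(*)$ at a vertex $v$ reads $\sum_{w\sim v}s(vw)\,(p(v)-p(w))/\|p(v)-p(w)\| = 0$, and the substitution $\tilde s(vw):=s(vw)/\|p(v)-p(w)\|$ turns it into the classical equilibrium condition $\sum_{w\sim v}\tilde s(vw)(p(v)-p(w))=\mathbf{0}$ recalled in the introduction. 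Since every segment has positive length, $s\mapsto\tilde s$ is a linear isomorphism of the space of edge-functions, so $(M,p)$ carries a nonzero self-stress if and only if the placed graph $(G,p)$ carries a nonzero classical equilibrium stress.

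The one point that is not mere bookkeeping is the well-definedness of the normals $\mathbf{n}(e,f)$ of Definition~\ref{DefStr} — their independence of the chosen combinatorial triangulation and of the orientations of the faces — and the hard part is precisely this, which however has already been asserted in the discussion immediately after Definition~\ref{DefStr} and may simply be quoted here. Everything else — the matching of sets, of incidences, of the equilibrium equations, and of the genericity/trivalence vocabulary — is a direct translation, so beyond that I expect no genuine obstacle.
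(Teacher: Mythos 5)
Your proposal is correct and matches the paper's intent: the authors state this proposition with \qed and no written proof precisely because it is the definitional translation you carry out. Your unwinding — including the two points that actually require a word, namely the rescaling of stresses by edge lengths in the $d=1$ case and the quoted well-definedness of the normals from Definition~\ref{DefStr} — is exactly the verification the paper leaves implicit.
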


Assume now that $M$ is the $d$-skeleton of some $(d{+}1)$-dimensional
manifold $\overline{M}$, that is, the chambers are well-defined.

\begin{definition}
  A \emph{lift} of $(\overline{M}, p)$ is an assignment of a linear function
  $h_C:\mathbb{R}^d\rightarrow \mathbb{R}$ to each chamber $C$. By
  definition, a lift satisfies the following: whenever two chambers $C$
  and $C'$ share a $d$-face $f$, the restrictions of $h_C$ and $h_{C'}$
  on the affine span $\langle p(f)\rangle$ coincide.

  A lift is non-trivial if (at least some of) the functions $h_C$ are
  different for different chambers.
\end{definition}

Let us fix some  chamber $C$. Lifts that are identically zero on $C$  form a linear
space $\mbox{Lift}(\overline{M},p)$.

\begin{theorem}[cf.\ \cite{Rybnikov99}]\label{ThmRecipStressLift}
  Let $M$ be the $d$-skeleton of some $(d{+}1)$-dimensional manifold
  $\overline{M}$.
  \begin{enumerate}\num
    \item If the first homology group of $\overline{M}$ vanishes, that is,
      $$H_1(M,\mathbb{Z}_2)=0,$$ then the linear spaces
      $\mathrm{Lift}(\overline{M},p)$ and the space of self-stresses
      $\mathrm{Stress}(M,p)$ are canonically isomorphic.
    \item Liftability of $(\overline{M},p)$ implies  self-stressability of $(M,p)$.
      \qed
  \end{enumerate}
\end{theorem}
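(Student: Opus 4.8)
The plan is to construct the canonical isomorphism of part (1) directly and then observe that part (2) is an immediate corollary of the construction (even without the homology hypothesis, only the ``lift $\to$ stress'' direction survives). First I would fix a base chamber $C_0$ and recall that $\mathrm{Lift}(\overline{M},p)$ consists of assignments $C \mapsto h_C$ of affine linear functions (equivalently, of graphs, which are hyperplanes in $\mathbb{R}^{d+1}$) with $h_{C_0} \equiv 0$, subject to the agreement condition across shared $d$-faces. Given such a lift, I want to produce a self-stress. For each $d$-face $f$ separating two chambers $C, C'$, the two hyperplanes $\mathrm{graph}(h_C)$ and $\mathrm{graph}(h_{C'})$ both contain the $d$-plane $\langle p(f)\rangle$ lifted trivially—no: they agree on $\langle p(f)\rangle$, so their difference $h_C - h_{C'}$ is an affine function vanishing on $\langle p(f)\rangle$, hence of the form $\ell_f \cdot (\text{affine form cutting out } \langle p(f)\rangle)$ for a scalar $\ell_f$. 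This scalar, suitably normalized against the chosen unit normals $\mathbf{n}(e,f)$, is the candidate value $s(f)$ of the self-stress; the orientation/sign bookkeeping is exactly what Definition~\ref{DefStr} (via the triangulation and induced orientations) is set up to make consistent.

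Next I would verify the equilibrium condition $(*)$ at each $d$-edge $e$. Around $e$ the incident chambers form a cyclic sequence $C_1, C_2, \ldots, C_m, C_1$ (this is where regularity of the CW-complex and the manifold hypothesis are used: the link of a $d$-edge is a circle), and the incident $d$-faces interleave them. Telescoping the differences $h_{C_{j}} - h_{C_{j+1}}$ around this cycle gives zero identically; restricting this telescoping identity to the $(d-1)$-plane $\langle p(e)\rangle$ and reading off the component normal to $e$ inside the ambient space produces precisely $\sum_{f \supset e} s(f)\mathbf{n}(e,f) = 0$. So every lift yields a self-stress, and this assignment $\mathrm{Lift} \to \mathrm{Stress}$ is visibly linear. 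For the opposite direction—building a lift from a self-stress—I would start with $h_{C_0} \equiv 0$ and extend across $d$-faces one chamber at a time, using $s(f)$ to prescribe the jump $h_C - h_{C'}$. The obstruction to this being well-defined is a cocycle living on cycles in the dual graph (chambers as vertices, $d$-faces as edges); the equilibrium condition $(*)$ at each $d$-edge says exactly that this cocycle vanishes on the ``small'' cycles around $d$-edges, and $H_1(M,\mathbb{Z}_2) = 0$ (equivalently $H_1$ of the chamber-adjacency complex) guarantees these small cycles generate all cycles, so the cocycle is globally exact and the lift exists. The two constructions are mutually inverse by inspection, giving the canonical isomorphism.

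For part (2), liftability means $\mathrm{Lift}(\overline{M},p) \neq 0$; the linear map $\mathrm{Lift} \to \mathrm{Stress}$ constructed above sends a nontrivial lift to a nonzero stress (a lift with all $h_C$ distinct forces some jump scalar $\ell_f \neq 0$, hence some $s(f) \neq 0$), so $(M,p)$ is self-stressable. Note this half needs no hypothesis on $H_1$, which is why it is stated separately.

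The main obstacle I anticipate is not the homological argument but the sign and orientation bookkeeping: making the normalization $s(f) := \ell_f / (\text{normalizing scalar})$ compatible with the unit normals $\mathbf{n}(e,f)$ of Definition~\ref{DefStr} across both chambers bordering $f$, and checking that the telescoping around a $d$-edge really reproduces $(*)$ with the correct signs rather than some signed variant. This is exactly the point where one must use that $\mathbf{n}(e,f)$ is defined via the \emph{induced} orientation of the simplicial face, and where one invokes the already-cited independence of the self-stress notion from the choice of triangulation and face-orientations. Once the signs are pinned down on one $d$-edge, the rest is formal; I would organize the write-up so that the orientation conventions are fixed once at the start and then used uniformly.
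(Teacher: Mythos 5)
The paper does not actually prove Theorem~\ref{ThmRecipStressLift}: it is stated with a \emph{cf.}~citation to Rybnikov and closed with a \qed, i.e.\ it is quoted from the literature, so there is no in-paper argument to compare yours against. What you propose is essentially the standard Maxwell--Cremona/Rybnikov correspondence, and in outline it is sound: the jump $h_C-h_{C'}$ across a shared $d$-face $f$ is an affine function vanishing on the hyperplane $\langle p(f)\rangle$, its normalized coefficient is the stress $s(f)$; telescoping the jumps around the circular link of an interior $d$-edge and rotating by $90^\circ$ in the $2$-plane normal to $\langle p(e)\rangle$ (this rotation is exactly the passage from ``unit normal to $f$'' to the paper's $\mathbf{n}(e,f)$, which lies \emph{in} $f$ and is normal to $e$) yields condition $(*)$; conversely a self-stress prescribes the jumps, equilibrium at each $d$-edge kills the monodromy on the elementary dual cycles around $(d{-}1)$-cells, and the homological hypothesis promotes this to all dual cycles, after which integration from a base chamber with $h_{C_0}\equiv 0$ gives the inverse map. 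Three points would need to be made honest in a full write-up. First, the ``link of a $d$-edge is a circle'' step, and hence the lift$\,\to\,$stress direction, uses that the edge is interior (or that $\overline{M}$ is closed, as in the classical planar case where the outer face is part of the complex); at boundary edges the telescoping sum does not close up. Second, the claim that the elementary cycles generate all dual-graph cycles is really the statement that dual cycles modulo the boundaries of the dual $2$-cells compute $H_1(\overline{M})$ (via the dual cell structure), and since the hypothesis is vanishing of $H_1$ with $\mathbb{Z}_2$ coefficients while the monodromy takes values in a real vector space, one should note that torsion in $H_1(\overline{M};\mathbb{Z})$ is harmless because a multiple of any cycle already bounds. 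Third, in part~(2) ``non-trivial lift'' means only that some two chambers carry different functions; combined with connectedness of the dual graph this still produces some adjacent pair with a nonzero jump, hence a nonzero stress, as you intend. With those adjustments, and the sign conventions pinned down via the oriented triangulation of Definition~\ref{DefStr} as you indicate, your argument is a correct proof of the statement and, as you observe, part~(2) needs no hypothesis on $H_1$.
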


\begin{remark}\label{remark-last}
The theorem gives another proof of Corollary~\ref{corK5} 
Indeed, each realization of $K_5$ can be viewed 
as a projection of a four-dimensional simplex. 
In other words, it is liftable, and hence stressable.
\end{remark}

Each $d$-vertex $v$  of a R-framework yields in a natural way
a spherical framework via the following algorithm:
 \begin{enumerate}
 \item We may assume that each face is a simplex, otherwise triangulate the faces.
   \item Take an affine
$h$ which is orthogonal to the affine span $\langle p(v)\rangle$. Clearly, we have $dim h=3$.
   \item Take a small sphere $S^2$ lying in the {plane} $e$
and centered at the intersection point $O= h\cap \langle p(v)\rangle$.
   \item For each $d$-face $f$ incident to $v$, take the projection $Pr_h (f)$ to the plane $h$ and the intersection $Pr_h (f)\cup S^2$. Since $f$ is a simplex,
   the intersection is a geodesic arch.
 \end{enumerate}
This yields a framework $\mathcal{S}_v$ placed in the sphere  $S^2$.
Self-stressability of spherical graphs is well understood since it reduces
to self-stressability of planar graphs (see
\cite{crapo+1993,panina-2009,streinu+2005}).
A face loop is called \emph{local with respect a $d$-vertex $v$} if all the
$d$-faces and $d$-edges participating in the path are incident to $v$.

\begin{lemma}\label{LocalMon}
  The two statements are equivalent:
  \begin{enumerate}\num
    \item The stress monodromy of each local (writh respect to some  $d$-vertex $v$)
      face loop is trivial.
    \item The spherical framework $\mathcal{S}_v$ is self-stressable.
  \end{enumerate}
\end{lemma}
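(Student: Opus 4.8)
The plan is to prove Lemma~\ref{LocalMon} by establishing a dictionary between local face loops about $v$ and cycles in the spherical framework $\mathcal{S}_v$, so that stress transition in the $d$-framework corresponds exactly to stress transition in $\mathcal{S}_v$. First I would fix a $d$-vertex $v$, pass to the triangulated picture (so every incident $d$-face is a simplex), and set up the orthogonal $3$-plane $h$ through $O = h\cap\langle p(v)\rangle$ together with the small sphere $S^2$. The key observation is that projecting to $h$ collapses the $(d-2)$-plane $\langle p(v)\rangle$ to the point $O$: each $d$-face $f$ incident to $v$ projects to a $2$-plane through $O$ (hence to a great circle arc on $S^2$), and each $d$-edge $e$ incident to $v$ projects to a line through $O$ (hence to a point, a ``vertex'' of $\mathcal{S}_v$). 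Moreover the normal $\mathbf{n}(e,f)$, which lies in $f$ and is orthogonal to $e$, projects to a vector in $h$ tangent to the great circle of $f$ at the point of $e$ — i.e.\ precisely the datum of the spherical framework. Thus the local equilibrium equation \eqref{self-stress-d-framework} at $e$ is, after projection to $h$, literally the equilibrium equation of $\mathcal{S}_v$ at the corresponding spherical vertex; this requires checking that $\mathrm{Pr}_h$ is injective on the relevant normals, which holds because $\langle p(v)\rangle \perp h$ and the framework is generic, so nothing degenerates.

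With this correspondence in hand, the proof of the equivalence is short. A local face loop about $v$ is a cyclic sequence of $d$-faces incident to $v$, consecutive ones sharing a $d$-edge incident to $v$; under $\mathrm{Pr}_h$ this becomes a closed walk in $\mathcal{S}_v$, and conversely every cycle of $\mathcal{S}_v$ lifts back. Because genericity is preserved (the three normals at each incident $e$ project to three distinct directions in $h$), the stress transition along the local face loop — defined by propagating a stress value through the trivalent equilibrium relations \eqref{self-stress-d-framework} — agrees term by term with the stress transition along the image cycle in $\mathcal{S}_v$. Hence the stress monodromy of the local face loop equals the stress monodromy of the corresponding spherical cycle.

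For \ref{itm:i}$\Rightarrow$\ref{itm:ii}: if every local face loop has trivial monodromy, then every cycle of $\mathcal{S}_v$ has trivial monodromy, and by the cited reduction of spherical self-stressability to planar self-stressability (via Theorem~\ref{main-conditions} applied to $\mathcal{S}_v$, or directly \cite{crapo+1993,panina-2009,streinu+2005}) we may propagate a nonzero stress consistently over all of $\mathcal{S}_v$, yielding a nonzero self-stress; transporting it back along $\mathrm{Pr}_h^{-1}$ gives a self-stress of $\mathcal{S}_v$ in the original normalization. For \ref{itm:ii}$\Rightarrow$\ref{itm:i}: a self-stress $s$ on $\mathcal{S}_v$ assigns a nonzero value to each face, and the ratio of $s$ on consecutive faces is exactly the stress-transition factor along the corresponding step of any local face loop, so the product around any closed local face loop telescopes to $1$, i.e.\ the monodromy is trivial. (One should note, as in the proof of Theorem~\ref{main-conditions}, that steps through successively repeated faces or through $f_{i-1,i}=f_{i+1,i+2}$ contribute trivially and cause no trouble.)

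The main obstacle I expect is not conceptual but bookkeeping: one must verify carefully that the projection $\mathrm{Pr}_h$ really does send the $d$-framework normal data at edges incident to $v$ onto the tangent-vector data of the spherical framework — in particular that unit normals map (up to positive scaling, which does not affect equilibrium or monodromy) to the correct tangent directions of the great-circle arcs, and that no incidence or genericity condition collapses under the projection. Once that identification is pinned down, everything else is a direct translation of Lemma~\ref{LemmaMonodr} and Theorem~\ref{main-conditions} into the spherical setting.
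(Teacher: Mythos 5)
Your proposal is correct and follows essentially the same route as the paper's (much terser) proof: identify the equilibrium data at $d$-edges incident to $v$ with the stress data of $\mathcal{S}_v$ — which works because every normal $\mathbf{n}(e,f)$ at such an edge is orthogonal to $\langle p(v)\rangle$ and hence already lies parallel to $h$ — and then invoke the standard ``trivial monodromy iff a consistent stress assignment exists'' equivalence. The paper simply states this identification and the monodromy argument in two sentences; your write-up supplies the projection bookkeeping it leaves implicit.
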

\begin{proof}
  Triviality of any local stress monodromy implies that stresses can be
  assigned to the faces incident to $v$ in such a way that locally the
  equilibrium condition holds.

  The same stress assignment gives a self-stress of
  $\mathcal{S}_v$, and vice versa.
\end{proof}

\begin{example}
  If a $d$-vertex $v$ has exactly four incident $d$-edges, then
  $\mathcal{S}_v$ is stressable. Indeed, in this case $\mathcal{S}_v$ is a
  $K_4$ placed on the sphere, which is always stressable.
\end{example}

\begin{theorem}\label{ThmRframeworkStressability}
  Assume that $\mathcal{R}$ is a trivalent R-framework.
  \begin{enumerate}\num
    \item $\mathcal{R}$ is self-stressable iff for each face loop the
      stress monodromy is trivial.
    \item $\mathcal{R}$ is self-stressable iff the two conditions hold:
      \begin{enumerate}
        \item For each vertex $v$, the induced spherical framework
          $\mathcal{S}_v$ is self-stressable.
        \item For some generators $g_1,\ldots,g_k$ of the first homolog
          group $H_1(\mathcal{R})$, and some collection of representatives
          $\gamma_1,\ldots,\gamma_k$ that are face-cycles, all the stress
          monodromies are trivial. (One representative for one generator).
      \end{enumerate}
    \item In particular, if $\mathcal{R}$ is one-connected, its
      self-stressability is equivalent to self-stressability of
      $\mathcal{S}_v$ for all the $d$-vertices.
  \end{enumerate}
\end{theorem}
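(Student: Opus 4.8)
The plan is to derive all three items from the machinery developed in Section~\ref{sectionstressability} together with Lemma~\ref{LocalMon}. The underlying principle, already isolated in Theorem~\ref{main-conditions} for trivalent $d$-frameworks, is that self-stressability is equivalent to triviality of every stress monodromy; the only new ingredient here is bookkeeping over the topology of $\mathcal{R}$ via $H_1$, plus the local-to-global reduction supplied by the spherical frameworks $\mathcal{S}_v$.

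First I would prove item (i). The R-framework $\mathcal{R}$ yields a trivalent $d$-framework in the sense of Definition~\ref{dframeworkdefinition} with self-stressability agreeing with Definition~\ref{DefStr}. Item~\ref{itm:iii} of Theorem~\ref{main-conditions} says $\mathcal{R}$ is self-stressable iff every induced face-loop $d$-framework is self-stressable, and by Lemma~\ref{LemmaMonodr}(1) a (generic) face-cycle is self-stressable exactly when its stress monodromy is trivial. Since Proposition~\ref{homotopic-face-cycles-stress} shows the monodromy is a face-homotopy invariant, and every face-cycle is face-homotopic to a face-loop (by removing repetitions via elementary surgery-flips), the condition ``every face-loop is self-stressable'' is the same as ``every face-cycle has trivial stress monodromy.'' This gives item (i). (One should remark that $\mathcal{R}$ being face-connected is needed here so that a single global stress can be propagated; if $\mathcal{R}$ is not face-connected one argues component by component.)

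Next, item (ii). By item (i) it suffices to show that triviality of all stress monodromies follows from conditions (a) and (b), the converse being immediate. By Lemma~\ref{LemmaMonodr}(4) the monodromy is a homomorphism from the group of face-cycles (up to homology, after killing the monodromy-trivial ones) to $\R\setminus\{0\}$; more precisely, monodromy factors through $H_1(\mathcal{R})$ once it vanishes on all null-homologous cycles. Condition (a), via Lemma~\ref{LocalMon}, is exactly the statement that every \emph{local} face-loop has trivial monodromy. The key geometric point is that the local face-loops around the $d$-vertices generate, together with the chosen representatives $\gamma_1,\ldots,\gamma_k$ of the generators $g_1,\ldots,g_k$ of $H_1(\mathcal{R})$, the full group of face-cycles up to the relations coming from face-homotopy: a null-homologous face-cycle bounds a $2$-chain in the dual complex, hence decomposes (via homological addition, which respects monodromy multiplicatively by Lemma~\ref{LemmaMonodr}(4)) into a product of elementary local loops around the $d$-vertices it sweeps across. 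Therefore if all local monodromies are trivial, every null-homologous face-cycle has trivial monodromy; and then any face-cycle has monodromy equal to a product of the $\gamma_j$-monodromies (with multiplicities given by its homology class), which is $1$ by condition (b). Hence all monodromies are trivial and item (ii) follows.

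Finally item (iii) is the special case of item (ii) when $H_1(\mathcal{R})=0$: then there are no generators $g_j$, condition (b) is vacuous, and self-stressability reduces to condition (a), i.e.\ to self-stressability of $\mathcal{S}_v$ for every $d$-vertex $v$. The main obstacle I anticipate is the generation statement in item (ii): one must argue carefully that a null-homologous face-cycle in $\mathcal{R}$ can be written, modulo face-homotopy and homological addition, as a product of local face-loops around $d$-vertices. This is the analogue of the classical fact that in a $2$-complex the boundaries of $2$-cells generate the $1$-cycles that bound; here the ``dual $2$-cells'' are precisely the stars of $d$-vertices, and one needs that the $(3,*)$-valent local structure around a $d$-vertex is captured combinatorially by $\mathcal{S}_v$. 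Everything else is a direct invocation of the already-established propositions.
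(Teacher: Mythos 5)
Your proposal is correct and follows essentially the same route as the paper: item (i) by stress propagation with monodromy triviality ensuring consistency, item (ii) by decomposing any face loop homologically into the representatives $\gamma_1,\ldots,\gamma_k$ plus local face loops (trivial by Lemma~\ref{LocalMon}) and using the multiplicativity of monodromy from Lemma~\ref{LemmaMonodr}, and item (iii) as the case $H_1(\mathcal{R})=0$. The generation statement you flag as the main obstacle is exactly the step the paper itself only asserts (``any face loop is a linear combination of $\gamma_1,\ldots,\gamma_k$ and some local face loops''), so your treatment is at the same level of detail as the published argument.
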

\begin{proof}
  (i) Take any face, assign to it any stress, and extend it to other
  faces. Triviality of the monodromy guarantees that no contradiction will
  arise.

  (ii) Keeping in mind Lemma~\ref{LemmaMonodr}, observe that any face
  loop is a linear combination of $\gamma_1,\ldots,\gamma_k$ and some
  local face loops.
  By Lemma~\ref{LocalMon}, the monodromies of local loops are trivial. It remains to apply (i).
 Now (iii) follows.
\end{proof}

\subsection{Some examples}

Let us start by two elementary examples.

(1)
Take the Schlegel diagram of a $3$-dimensional cube (that is the
projection of the edges of a cube). It is a trivalent graph in the plane.
It is self-stressable since it is liftable by construction.

However, one easily can redraw it keeping the combinatorics in such a way
that the realization is no longer self-stressable. For this, it is sufficient to  generically perturbe the positions of the vertices.

(2)
Now let us work out an analogous example in $\mathbb{R}^3$. Take the
Schlegel diagram of a $4$-dimensional cube (that is, the projections of all
the $2$-faces). It is a $(3,4)$-valent  R-framework. It is self-stressable since it
is liftable by construction.
But, unlike (1), by Theorem \ref{ThmRframeworkStressability},
any other its  realization is self-stressable.

The above examples  suggest   general questions:

\emph{ Are all trivalent R-frameworks self-stressable? Are all $(3,4)$-valent R-frameworks self-stressable?}

The answer is negative, which is demonstrated through the below example,
which is interesting for its own sake.

\begin{example}
  Take a prism $P$ in $\mathbb{R}^3$, a pyramid over $P$, and the
  projection of the $2$-skeleton of the pyramid back to $\mathbb{R}^3$. We
  obtain a R-framework $\mathcal{R}$ which is defined with some freedom:
  firstly, one may alter the position of the vertex of the pyramid, and
  secondly, one may apply a projective transform to $P$.
\end{example}

\begin{lemma}
  The R-framework $\mathcal{R}$ is self-stressable, and the space of stresses
  is one-dimensional.
\end{lemma}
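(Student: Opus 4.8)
The plan is to use Theorem~\ref{ThmRframeworkStressability}(iii), since the R-framework $\mathcal{R}$ arises as the projection of the $2$-skeleton of a $4$-dimensional polytope (the pyramid over the $3$-dimensional prism $P$), and such a polytopal boundary complex is simply connected; hence $H_1(\mathcal{R}) = 0$ and it suffices to check that the spherical framework $\mathcal{S}_v$ is self-stressable at every $3$-vertex (i.e.\ every $0$-cell of the original complex, which here corresponds to a $d$-vertex in the $d=2$ setting). First I would enumerate the $d$-vertices of $\mathcal{R}$: they are the images of the vertices of $P$ together with the image of the apex of the pyramid. At each such vertex I would write down the local star of $2$-faces and $1$-edges and identify the resulting spherical graph $\mathcal{S}_v$. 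For the apex, the star consists of all triangular faces of the pyramid meeting the apex, so $\mathcal{S}_v$ is (the spherical realization of) the graph of the prism $P$, which is planar and $3$-connected, hence liftable and self-stressable. For a base vertex of $P$, the local star is a small patch and $\mathcal{S}_v$ is again a small planar graph (a wheel-like configuration), which one checks directly is self-stressable.

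For the existence half of the claim I would then invoke Theorem~\ref{ThmRframeworkStressability}(iii): all $\mathcal{S}_v$ being self-stressable and $\mathcal{R}$ being simply connected gives a non-zero self-stress on $\mathcal{R}$. Alternatively, and more cleanly, I would observe that $\mathcal{R}$ is by construction a projection of the $2$-skeleton of a $4$-polytope, hence liftable, hence self-stressable by Theorem~\ref{ThmRecipStressLift}(ii); the canonical lift is the one coming from the position of the pyramid's apex relative to the hyperplane containing $P$. This immediately settles self-stressability; the substantive part of the lemma is the dimension count.

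For the one-dimensionality of $\mathrm{Stress}(\mathcal{R})$, the plan is to count. The space of self-stresses has dimension equal to the corank of the rigidity/equilibrium matrix whose rows are indexed by $d$-edges (each contributing $\dim$-many scalar equations, but since around a trivalent edge the three normals span only a $2$-plane, each $d$-edge of a trivalent framework contributes exactly one independent scalar equation) and whose columns are indexed by the $d$-faces. So I must show $\#\{d\text{-faces}\} - \mathrm{rank} = 1$, equivalently that the equilibrium map from $\R^{\#\text{faces}}$ to $\R^{\#\text{edges}}$ has a kernel of dimension exactly $1$. One way: use the isomorphism of Theorem~\ref{ThmRecipStressLift}(i) with $\mathrm{Lift}(\overline{M},p)$ and show that the space of lifts, modulo those vanishing on a fixed chamber, is one-dimensional — i.e.\ up to the trivial $3$-parameter family of affine functions the lift is rigidly determined. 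Since $\overline M$ is the boundary complex of the pyramid over $P$ (a $4$-polytope), its lifts are exactly the restrictions of linear functionals on $\R^4$ to the faces, a space of dimension $4$; modding out by the $3$-dimensional space of affine functions on the chamber containing $C$ leaves dimension $1$. I would spell out that genericity in the construction (generic apex, generic projective image of $P$) guarantees no extra lifts appear, so the corank is not larger.

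The main obstacle I expect is the bookkeeping needed to make the lift-space count rigorous: one has to be careful that $H_1$ vanishes for the complex $M$ actually used (the $2$-skeleton of $\partial(\text{pyramid over }P)$, which is a $2$-sphere, so indeed $H_1 = 0$), and that the projection is generic enough that $\mathrm{Lift}$ has the expected dimension $4$ and not more — i.e.\ that the projection does not accidentally collapse the polytope into special position admitting a larger lifting space. Once that genericity is invoked, the Maxwell--Cremona-type correspondence of Theorem~\ref{ThmRecipStressLift}(i) does all the work, and the count $4 - 3 = 1$ finishes the proof.
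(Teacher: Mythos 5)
Your existence argument (the second, ``cleaner'' one) is exactly the paper's: $\mathcal{R}$ is by construction a projection of the $2$-skeleton of the boundary of a $4$-dimensional pyramid, hence liftable, hence self-stressable by Theorem~\ref{ThmRecipStressLift}. But the dimension count, which you yourself identify as the substantive part, has a genuine gap. You reduce $\dim\mathrm{Stress}=1$ to $\dim\mathrm{Lift}=1$ via Theorem~\ref{ThmRecipStressLift}(i) and then assert that the lifts ``are exactly the restrictions of linear functionals on $\R^4$,'' with genericity invoked to rule out extra lifts. That assertion is precisely what has to be proved: a lift is any assignment of affine functions to chambers agreeing across shared $2$-faces, and a priori such an assignment need not come from a single functional on $\R^4$. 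Since the isomorphism of Theorem~\ref{ThmRecipStressLift}(i) makes $\dim\mathrm{Lift}=1$ \emph{equivalent} to $\dim\mathrm{Stress}=1$, appealing to it without an independent bound on one side is circular, and ``genericity'' is not a substitute for an argument here. The paper's route is much shorter and you missed it: $\mathcal{R}$ is trivalent (and face-connected and generic), so a stress prescribed on one $2$-face propagates uniquely along face-paths to every other face --- this is the stress-transition mechanism of Proposition~\ref{path-self-stress-existence} and Section~\ref{stress-s5} --- whence $\dim\mathrm{Stress}(\mathcal{R})\le 1$ with no lift counting at all; combined with liftability this gives dimension exactly $1$.

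Two smaller inaccuracies: the arithmetic ``$4-3=1$'' does not parse (affine functions on $\R^3$ form a $4$-dimensional space, and linear functionals on $\R^4$ do not restrict to chambers in the way the count suggests); and in your first route the inference that $\mathcal{S}_v$ at the apex is self-stressable ``because the prism graph is planar and $3$-connected'' is false for arbitrary realizations --- a generic placement of the prism graph ($9=2\cdot 6-3$ edges) carries no self-stress; it is stressable here only because this particular spherical framework is cut out of the liftable projection. These do not affect the existence half, but the one-dimensionality claim as written is not established.
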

\begin{proof}
  The R-framework $\mathcal{R}$ is a projection of the $2$-skeleton of
  $4$-dimensional tetrahedron. In other words, $\mathcal{R}$ is liftable,
  and therefore, self-stressable. Since it is trivalent,
  the space of stresses is at most one-dimensional.
\end{proof}

The prism $P$ has two disjoint triangular faces. They are also faces of
$\mathcal{R}$; let us call them \emph{green}.
The edges of these faces are also called \emph{green}. The faces of
$\mathcal{R}$ that are not green are called \emph{white}. The edges that
are not green are called \emph{white}.
Fix also one of the white faces, let us call it the \emph{test face} for
$\mathcal{R}$.

It is easy to check that any two of white faces of $\mathcal{R}$ are
connected by a face path which uses white edges only.

\noindent
\textbf{Main construction, first step:}

Take two copies of $\mathcal{R}$, say, $\mathcal{R}_1$ and $\mathcal{R}_2$
such that one of the green faces of $\mathcal{R}_1$ coincides with a green
face of  for $\mathcal{R}_2$. Patch $\mathcal{R}_1$ and $\mathcal{R}_2$
along these faces, and eliminate these green faces. Define the result by
$\mathcal{R}_{12}$.

\begin{lemma}
  $\mathcal{R}_{12}$ is self-stressable, and the space of the  self-stresses
  has dimension $1$.
\end{lemma}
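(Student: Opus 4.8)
The plan is to produce a non‑zero self‑stress on $\mathcal R_{12}$ by gluing together two rescaled copies of the self‑stress of $\mathcal R$, and then to show that \emph{every} self‑stress of $\mathcal R_{12}$ arises this way, which forces the stress space to be one‑dimensional.

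\emph{Existence.} As shown above, $\mathcal R$ carries a non‑zero self‑stress, and since $\mathcal R$ is a generic face‑connected trivalent $d$‑framework, Theorem~\ref{main-conditions} shows that such a self‑stress is non‑zero on \emph{every} face. Let $g$ be the common green face of $\mathcal R_1$ and $\mathcal R_2$, and let $s_1,s_2$ be the induced self‑stresses on the two copies; both are non‑zero at $g$. At each of the three (green) edges $e$ of $g$, the deleted face $g$ contributes to the equilibrium of the copy $\mathcal R_i$ a term $s_i(g)\,\mathbf n_i(e,g)$, where $\mathbf n_1(e,g)$ and $\mathbf n_2(e,g)$ are unit normals to the \emph{same} line $p(e)$ inside the \emph{same} plane $\langle p(g)\rangle$, hence parallel, with a sign independent of $e$. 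So one may rescale $s_2$ by a single scalar so that $s_1(g)\mathbf n_1(e,g)+s_2(g)\mathbf n_2(e,g)=0$ for all three edges $e$. Define $\bar s$ on $\mathcal R_{12}$ to equal $s_1$ on $\mathcal R_1$ and $s_2$ on $\mathcal R_2$ (no conflict, since $g$ has been removed). Every $d$‑edge of $\mathcal R_{12}$ other than the three edges of $g$ has all its incident faces inside a single copy, so equilibrium there is inherited from $s_1$ or $s_2$; at a green edge $e$ of $g$ the equilibrium of $\bar s$ is the sum of the two copies' equilibria minus the two $g$‑terms, which cancel by the rescaling. Hence $\bar s$ is a non‑zero self‑stress.

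\emph{Uniqueness (dimension $\le 1$).} Let $t$ be an arbitrary self‑stress of $\mathcal R_{12}$. Each white edge of $\mathcal R_i$ remains trivalent in $\mathcal R_{12}$, with all three incident faces white and contained in $\mathcal R_i$, and by genericity of the R‑framework the two normals at such an edge other than the pivoting one are linearly independent; hence the value of $t$ on one of these white faces determines it on the other two. Since any two white faces of $\mathcal R$ are joined by a face path through white edges, the value of $t$ on a single white face of $\mathcal R_i$ determines it on all white faces of $\mathcal R_i$. The self‑stress $s_i$ propagates by the same rule and is non‑zero there, so $t=\alpha_i s_i$ on the white faces of $\mathcal R_i$ for a scalar $\alpha_i$; the equilibria at the edges of the \emph{surviving} green face of $\mathcal R_i$ then force $t=\alpha_i s_i$ on all of $\mathcal R_i$ except $g$. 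Finally, writing out the equilibrium of $t$ at a green edge $e$ of $g$ and substituting the equilibria of $s_1$ and $s_2$ turns it into the single linear relation $\alpha_1 s_1(g)\mathbf n_1(e,g)+\alpha_2 s_2(g)\mathbf n_2(e,g)=0$, the same for all three edges. Thus $(\alpha_1,\alpha_2)$, and hence $t$, ranges over a one‑dimensional space; combined with the existence part this gives $\dim\mathrm{Stress}(\mathcal R_{12})=1$.

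\emph{Main obstacle.} The one delicate point is the propagation step — verifying that the ``white'' equilibria alone pin down the stress on each copy $\mathcal R_i$ up to a single scalar. This relies on the two facts observed above for $\mathcal R$ (white edges are trivalent with all incident faces white, and the white faces are connected through white edges); once these are in hand, the rest is bookkeeping around the three edges of the deleted face $g$.
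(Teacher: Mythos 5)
Your proof is correct and follows essentially the same route as the paper: existence by gluing suitably scaled copies of the one-dimensional stresses of $\mathcal{R}_1$ and $\mathcal{R}_2$ so that the contributions of the deleted green face cancel at its three edges, and dimension one by showing that any stress of $\mathcal{R}_{12}$ restricts on each copy to a scalar multiple $\alpha_i s_i$, with the green-edge equilibria imposing a single linear relation on $(\alpha_1,\alpha_2)$. You merely spell out the white-edge propagation step (using the fact, stated in the paper, that white faces are connected by face paths through white edges) which the paper's argument leaves implicit.
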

\begin{proof}
  { Self-stressability:}
  Take a self-stress $s_1$ of $\mathcal{R}_1$ and a stress $s_2$ of
  $\mathcal{R}_2$ such that $s_1-s_2$ on the patched green faces
  vanishes.
  Then $s_1-s_2$ represents a non-trivial stress of $\mathcal{R}_{12}$.

  { Dimension one:} Consider a
  stress $s$ on $\mathcal{R}_{12}$.
  Take a stress $s_1$ of $\mathcal{R}_1$ which agrees with $s$ on the test
  face of $F_1$ and a stress $s_2$ of $\mathcal{R}_2$ which agrees with
  $s$ on the test face of $\mathcal{R}_2$.
  Take $s-s_1-s_2$. It is a stress on $\mathcal{R}_{12}$ plus the green
  face which is zero everywhere on $\mathcal{R}_{12}$, except, may be,
  the green face, which possible only if $s-s_1-s_2$ vanishes everywhere.
  Therefore, each stress of $\mathcal{R}_{12}$ is a linear combination of
  two stresses of $\mathcal{R}_1$ and $\mathcal{R}_2$ that cancel each
  other on the green face.
\end{proof}

\textbf{Main construction, second step:}
We proceed in the same manner: we take one more copy of $\mathcal{R}$,
which is called $\mathcal{R}_3$ and patch it to $\mathcal{R}_{12}$ along
green faces, and eliminate the green face which was used for the patch. We
get $\mathcal{R}_{123}$. Analogously, we have:

\begin{lemma}
  $R_{123}$ is self-stressable, and the space of the stresses has dimension
  $1$.\qed
\end{lemma}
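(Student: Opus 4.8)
The plan is to run the argument of the preceding lemma one more time, now gluing the fresh copy $\mathcal{R}_3$ of $\mathcal{R}$ onto $\mathcal{R}_{12}$ along a green face; equivalently, one regards $\mathcal{R}_{123}$ as three copies $\mathcal{R}_1,\mathcal{R}_2,\mathcal{R}_3$ of $\mathcal{R}$ glued in a chain along green faces $G_1'=G_2$ and $G_2'=G_3$, these two identified green faces being deleted so that their edges become $4$-valent (incident to two white faces from each side), while two green faces $G_1$ and $G_3'$ survive. Along the way I would use, as before, that each $\mathcal{R}_i$ is trivalent with a one-dimensional stress space and that any two white faces of $\mathcal{R}_i$ are joined by a face path through white edges only.

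For self-stressability, take non-zero self-stresses $s_1,s_2,s_3$ on $\mathcal{R}_1,\mathcal{R}_2,\mathcal{R}_3$; each spans a one-dimensional space and (by genericity of the realization) is non-zero on the green faces. Rescale $s_2$ so that $s_1$ and $s_2$ agree, up to the orientation sign of the shared plane, on $G_1'=G_2$, and then rescale $s_3$ so that $s_2$ and $s_3$ agree up to sign on $G_2'=G_3$. The function on $\mathcal{R}_{123}$ equal to $s_i$ on the faces coming from $\mathcal{R}_i$ is well defined, the deleted green values playing no role. At every edge interior to some $\mathcal{R}_i$, and at the surviving green edges of $G_1$ and $G_3'$, the equilibrium~\eqref{self-stress-d-framework} is inherited from $s_i$; at a glued $4$-valent edge $e$ the sum of the two white contributions from one side equals $-s_i(G)\,\mathbf{n}(e,G)$ by the equilibrium of $s_i$ at $e$, so the full sum equals $-s_1(G)\mathbf{n}(e,G)-s_2(G)\mathbf{n}(e,G)=0$ by the arranged cancellation (and likewise for the other deleted face). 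Hence $\stress(\mathcal{R}_{123})\neq 0$.

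For one-dimensionality, let $s\in\stress(\mathcal{R}_{123})$. Every white edge lying inside a single copy $\mathcal{R}_i$ is still trivalent in $\mathcal{R}_{123}$, so the value $s(\tau_i)$ on the test face $\tau_i$ of $\mathcal{R}_i$ determines $s$ on all white faces of $\mathcal{R}_i$, in agreement with the unique self-stress of $\mathcal{R}_i$ having that value on $\tau_i$, and the equilibria at the trivalent green edges of $G_1$ and $G_3'$ then fix $s$ there as well. Thus $s$ is determined by the triple $s(\tau_1),s(\tau_2),s(\tau_3)$, so $\dim\stress(\mathcal{R}_{123})\le 3$. Comparing the equilibrium of $s$ at a glued $4$-valent edge of $G_1'=G_2$ with the equilibria of the corresponding stresses of $\mathcal{R}_1$ and $\mathcal{R}_2$ there forces $s_1(G_1')=\pm s_2(G_2)$, a non-trivial linear relation between $s(\tau_1)$ and $s(\tau_2)$; the glued edges of $G_2'=G_3$ give a second, independent relation between $s(\tau_2)$ and $s(\tau_3)$. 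Two independent linear relations cut the $3$-dimensional space of triples down to dimension $1$, so $\dim\stress(\mathcal{R}_{123})\le 1$, and with the self-stress constructed above, $\dim\stress(\mathcal{R}_{123})=1$.

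The delicate point is the non-degeneracy of these two relations, i.e.\ that the unique-up-to-scale self-stress of $\mathcal{R}$ does not vanish on a green face. This is where the freedom in building $\mathcal{R}$ --- a generic position of the pyramid apex together with a projective transform of the prism $P$ --- is to be invoked: for such realizations the green-face values of the unique self-stress are non-zero (for instance because the corresponding lift of the $2$-skeleton of the $4$-simplex puts no two chambers at equal height across a green face), so each relation expresses one of $s(\tau_1),s(\tau_2),s(\tau_3)$ as a non-zero multiple of its neighbour, which is exactly what makes the dimension drop from $3$ to $1$ rather than remaining $2$ or $3$.
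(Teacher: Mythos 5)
Your proof is correct and takes essentially the same route as the paper, which establishes this lemma simply by repeating (``analogously'') the two-piece argument used for $\mathcal{R}_{12}$: one-dimensional stress spaces of the copies, determination of a stress by its test-face values along white face-paths, and a cancellation relation at each removed green face. The only cosmetic difference is your closing genericity/lift discussion, which is unnecessary: the self-stress of $\mathcal{R}$ is automatically non-zero on the green faces by Theorem~\ref{main-conditions}~\ref{itm:i}, since $\mathcal{R}$ is a generic, face-connected, trivalent framework admitting a non-zero self-stress.
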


\textbf{Main construction, next steps:}
Now we have a chain of three copies of $\mathcal{R}$ patched together.
Only two green faces survive.  By adjusting the shapes of the components,
we may assume that these green faces coincide. Patch the last two
green faces and remove them.
After that,  we get a R-framework $\widetilde{\mathcal{R}}$. Generically we
have:

\begin{lemma}
  $\widetilde{\mathcal{S}}$ is not self-stressable, but it is locally
  self-stressable, that is  $\mathcal{S}_v$ is self-stressable for
  each vertex $v$.
\end{lemma}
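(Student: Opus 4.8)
The plan is to prove the two assertions separately. The guiding picture is that patching the chain $\mathcal R_1-\mathcal R_2-\mathcal R_3-\mathcal R_1$ into a loop creates a single essential cycle — a face loop running once around the three copies — so that the \emph{only} obstruction to self-stressability will be the stress monodromy along that loop, while near every $d$-vertex there is none. Because $\widetilde{\mathcal R}$ is no longer trivalent along the surviving green edges (each now meets four white faces), I would argue directly rather than quote Theorem~\ref{ThmRframeworkStressability}.

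First I would treat non-self-stressability. The main step is to show that any self-stress $s$ of $\widetilde{\mathcal R}$, restricted to a single copy $\mathcal R_i$, is a scalar multiple of the (nowhere-zero, by trivalence and genericity — a zero would propagate) self-stress $s^{\mathcal R}$ of $\mathcal R$. Indeed, every white edge of $\mathcal R_i$ meets the same three white faces in $\widetilde{\mathcal R}$ as in $\mathcal R_i$, so $s$ satisfies the $\mathcal R_i$-equilibrium there; hence the stress transition of $s$ along any white face path from a white face $f$ to a white face $f'$ agrees with the stress transition inside $\mathcal R_i$, namely the path-independent ratio $s^{\mathcal R}(f')/s^{\mathcal R}(f)$. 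Since any two white faces of $\mathcal R$ are joined by a white face path, $s$ equals $\mu_i\,s^{\mathcal R}$ on the white faces of $\mathcal R_i$ for a scalar $\mu_i$, and so extends to the full self-stress $\mu_i s^{\mathcal R}$ of $\mathcal R_i$. Next I would exploit the glued triangles: if $g$ is the triangle along which $\mathcal R_i$ and $\mathcal R_{i+1}$ are patched (the ``$b$''-face of $\mathcal R_i$ and the ``$a$''-face of $\mathcal R_{i+1}$), then at each of its three green edges $e$ the $\widetilde{\mathcal R}$-equilibrium of $s$ says that the white contributions from the two copies cancel; combining this with the $\mathcal R$-equilibria of $s^{\mathcal R}$ at $e$ inside each copy gives $(\mu_i s^{\mathcal R}(b_i)+\mu_{i+1}s^{\mathcal R}(a_{i+1}))\,\mathbf n(e,g)=0$, where $\mathbf n(e,g)$ is the common unit normal to $p(e)$ inside $\langle p(g)\rangle$ after the two combinatorial orientations of $g$ are matched. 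Hence $\mu_{i+1}=-\rho_i\mu_i$ with $\rho_i:=s^{\mathcal R}(b_i)/s^{\mathcal R}(a_{i+1})\neq0$, and running once around the loop forces $\mu_1=-(\rho_1\rho_2\rho_3)\,\mu_1$. For generic choices of the pyramid apices and of the projective images of $P$ one has $\rho_1\rho_2\rho_3\neq-1$, so $\mu_1=\mu_2=\mu_3=0$, and since every face of $\widetilde{\mathcal R}$ is a white face of some $\mathcal R_i$ this gives $s\equiv0$; thus $\widetilde{\mathcal R}$ is not self-stressable.

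Next I would prove local self-stressability by exhibiting, for each $d$-vertex $v$, a nowhere-zero solution of the equilibrium equations in the star of $v$; by the argument in the proof of Lemma~\ref{LocalMon} this is precisely a nonzero self-stress of $\mathcal S_v$. If $v$ is interior to a single copy $\mathcal R_i$, its star in $\widetilde{\mathcal R}$ is its star in $\mathcal R_i$ and the restriction of $s^{\mathcal R}$ works. If $v$ is a vertex of a glued triangle $g$ between $\mathcal R_i$ and $\mathcal R_{i+1}$, then $v$ is incident to white faces of both copies and to the two green edges of $g$ through it; I would assign $s^{\mathcal R}$ to the $\mathcal R_i$-faces at $v$ and $-\rho_i s^{\mathcal R}$ to the $\mathcal R_{i+1}$-faces at $v$. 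Then the white edges at $v$ are equilibrated within a single copy, both green edges at $v$ are equilibrated because $s^{\mathcal R}(b_i)-\rho_i s^{\mathcal R}(a_{i+1})=0$, and the assignment is nowhere zero; hence $\mathcal S_v$ is self-stressable for every $v$.

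The step I expect to be most delicate is the sign bookkeeping along the glued triangles: one must choose compatible combinatorial orientations of the two copies of each $g$ so that $\mathbf n(e,g)$ is genuinely common to the two sides and the recursion reads exactly $\mu_{i+1}=-\rho_i\mu_i$ --- any other convention merely replaces $-1$ by $+1$ in $\mu_1=\pm(\rho_1\rho_2\rho_3)\mu_1$ and changes nothing essential. The other (and only non-combinatorial) point to check is that $\rho_1\rho_2\rho_3$ is not identically $-1$ as the apices and the projective images of $P$ vary, for which it suffices to evaluate it on one explicit realisation.
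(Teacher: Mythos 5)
Your proposal is correct, and it rests on the same underlying mechanism as the paper's proof --- a monodromy-type obstruction around the loop of three copies, which vanishes only for non-generic realizations --- but it takes a more direct, self-contained route. The paper reuses the lemmas already established for the chain: it reinstates the two last green faces to recover $\widetilde{\mathcal{R}}'$ (essentially $\mathcal{R}_{123}$), invokes the one-dimensionality of its stress space, propagates the stress from the test face of $\mathcal{R}_1$ through the copies, and concludes that generically the two green stresses fail to cancel; local self-stressability is then dispatched in one line by putting back a green face and restricting the resulting self-stress (read correctly, one must reinstate a green face whose junction does \emph{not} contain the given vertex $v$, so that the star of $v$ is unchanged). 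You instead argue on $\widetilde{\mathcal{R}}$ itself: the observation that white edges see only the three white faces of their own copy, together with white face-connectedness and the nowhere-vanishing of each copy's generator, forces any self-stress to be $\mu_i s^{\mathcal{R}}$ on copy $i$; the equilibria at the green edges then give $\mu_{i+1}=-\rho_i\mu_i$ and hence $\mu_1=-\rho_1\rho_2\rho_3\,\mu_1$, and your condition $\rho_1\rho_2\rho_3\neq -1$ is exactly the paper's ``generically the green stresses do not cancel''. Your explicit local construction ($s^{\mathcal{R}}$ on one copy, $-\rho_i s^{\mathcal{R}}$ on the neighbouring one, using that each vertex lies on at most one junction triangle) is in fact more detailed than the paper's one-liner for $\mathcal{S}_v$. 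What the paper's route buys is brevity by leaning on the $\mathcal{R}_{12}$/$\mathcal{R}_{123}$ lemmas; what yours buys is independence from those lemmas, explicit sign bookkeeping at the glued triangles, and a sharp identification of the single non-combinatorial input. On that input both arguments are equally incomplete: the paper asserts genericity without exhibiting a realization, and you defer the same verification of $\rho_1\rho_2\rho_3\neq-1$ to ``one explicit realisation'' without carrying it out.
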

\begin{proof}
 {Non-self-stressability:} Before the last step, the space of
  stresses was one-dimensional. After the last step (=after patching two
  last green faces), the dimension can only drop.
  Let us patch back the two last green faces and get a framework
  $\widetilde{\mathcal{R}}'$.

  Assume $\overline{\mathcal{R}}$ is self-stressable. This means that
  $\widetilde{\mathcal{R}}'$ has a stress which sums up to zero on the
  last two green faces.
  By the above lemmata, the value of the stress on the test face of
  $\mathcal{R}_1$ uniquely defines the stress on the first green face and
  the stress on the test face of $\mathcal{R}_2$ (and all the faces of
  $\mathcal{R}_1$ and $\mathcal{R}_2$), this uniquely defines the stress
  on the test face of $\mathcal{R}_3$, and so on. We conclude that the stress on the
  second green face is also uniquely defined, and generically, the green
  stresses do not cancel each other.

{Local self-stressability} follows from the fact that putting
  back any of the green faces creates a self-stressable R-framework.
\end{proof}

Let us observe that $\widetilde{\mathcal{R}}$ is trivalent, but not $(3,4)$-valent.
Each of the green edges has four incident faces, and there are vertices of
valency higher that $4$.
However one can prove that a number of local surgeries turns it to a
$(3,4)$-valent R-framework.

Eventually we arrive at a $(3,4)$ R-framework which is locally
self-stressable, but not globally self-stressable.

\section*{Acknowledgement.}

The collaborative research on this article was initiated during
``Research-In-Groups'' programs of ICMS Edinburgh, UK. The authors are
grateful to ICMS for hospitality and excellent working conditions.

Christian M{\"u}ller gratefully acknowledges the support of the
Austrian Science Fund (FWF) through project P~29981.

\bibliographystyle{plain}
\bibliography{tensegrity}

\end{document}